\providecommand{\algorithmname}{Algorithm}
\newtheorem{lem}{Lemma}[section]
\newtheorem{rem}{Remark}[section]
\newtheorem{prop}{Proposition}[section]
\newcounter{hypA}
\newcounter{hypB}
\newcounter{hypD}
\newenvironment{hypD}{\refstepcounter{hypD}\begin{itemize}
 \item[({\bf D\arabic{hypD}})]}{\end{itemize}}
\date{}
\begin{document}

\begin{center}

{\Large \textbf{Unbiased Estimation of the Solution to Zakai's Equation}}

\vspace{0.5cm}

BY HAMZA M. RUZAYQAT \& AJAY JASRA

{\footnotesize Computer, Electrical and Mathematical Sciences and Engineering Division, King Abdullah University of Science and Technology, Thuwal, 23955, KSA.}
{\footnotesize E-Mail:\,} \texttt{\emph{\footnotesize hamza.ruzayqat@kaust.edu.sa, ajay.jasra@kaust.edu.sa}}
\end{center}

\begin{abstract}
In the following article we consider the non-linear filtering problem in continuous-time
and in particular the solution to Zakai's equation or the normalizing constant.
We develop a methodology to produce finite variance, almost surely
unbiased estimators of the solution to Zakai's equation. That is, given
access to only a first order discretization of solution to the Zakai equation, we present a method
which can remove this discretization bias. The approach, under assumptions, is proved to have finite variance and
is numerically compared to using a particular multilevel Monte Carlo method.\\
\noindent \textbf{Key words}: Unbiased Estimation, Multilevel Monte Carlo, Particle Filters, Non-Linear Filtering.
\end{abstract}

\section{Introduction}

Let $(\Omega,\mathcal{F})$ be a measurable space. On $(\Omega,\mathcal{F})$
consider the probability measure $\mathbb{P}$ and a pair of stochastic processes $\{Y_t\}_{t\geq 0}$, $\{X_t\}_{t\geq 0}$, with $Y_t\in\mathbb{R}^{d_y}$, $X_t\in\mathbb{R}^{d_x}$ $(d_{y},d_x)\in\mathbb{N}^2$, $d_{x},d_y<+\infty$, with $X_0=x_*\in\mathbb{R}^{d_x}$ given:
\begin{eqnarray}
dY_t & = & h(X_t)dt + dB_t \label{eq:obs}\\
dX_t & = & b(X_t)dt + \sigma(X_t)dW_t \label{eq:state}
\end{eqnarray}
where $h:\mathbb{R}^{d_x}\rightarrow\mathbb{R}^{d_y}$, $b:\mathbb{R}^{d_x}\rightarrow\mathbb{R}^{d_x}$, $\sigma:\mathbb{R}^{d_x}\rightarrow\mathbb{R}^{d_x\times d_x}$ with $\sigma$ non-constant and of full rank and $\{B_t\}_{t\geq 0}, \{W_t\}_{t\geq 0}$ 
are independent standard Brownian motions of dimension $d_y$ and $d_x$ respectively. Let $\{\mathcal{F}_t\}$ be a filtration on $\mathcal{F}$ such that $\{B_t\}_{t\geq 0}$ and 
$\{W_t\}_{t\geq 0}$
are independent standard Brownian motions. Let $T>0$ be an arbitrary real number 
and introduce the probability measure $\overline{\mathbb{P}}$ which is equivalent to $\mathbb{P}$ on $\mathcal{F}_T$ defined by the Radon-Nikodym derivative 
$$
Z_{T}:=\frac{d\mathbb{P}}{d\overline{\mathbb{P}}} = \exp\Big\{\int_{0}^T h(X_s)^*dY_s - \frac{1}{2}\int_{0}^Th(X_s)^*h(X_s)ds\Big\}
$$
with, under $\overline{\mathbb{P}}$, $\{X_t\}_{t\geq 0}$ following the dynamics \eqref{eq:state} and independently $\{Y_t\}_{t\geq 0}$ is a standard Brownian motion. We have the solution to the Zakai equation (e.g.~\cite[Theorem 3.2.4.]{crisan_bain}) for $\varphi\in\mathcal{B}_b(\mathbb{R}^{d_x})$ (bounded and measurable real valued functions)
$$
\gamma_{t}(\varphi) := \overline{\mathbb{E}}\Big[\varphi(X_t)\exp\Big\{\int_{0}^t h(X_s)^*dY_s - \frac{1}{2}\int_{0}^th(X_s)^*h(X_s)ds\Big\}\Big|\mathcal{Y}_t\Big]
$$
where $\mathcal{Y}_t$ is the filtration generated by the process $\{Y_s\}_{0\leq s \leq t}$. Our objective is to, recursively in time, estimate $\gamma_{t}(\varphi)$ over some
finite and regular time grid. The solution of Zakai's equation can be useful for model selection in statistics or as a solution of a particular stochastic partial differential equation in applied mathematics.

In most cases of practical interest, one only has access to finite time discretization of the data and so one must often, correspondingly, discretize the functionals associated to the signal and observations. Several possibilities have been considered in the literature (e.g.~\cite{crisan_ortiz1,crisan_ortiz2}) but we use the first order approach in \cite{picard}. Even given this
discretization, one must often time-discretize \eqref{eq:state} of which we use the Euler method. Once one has reached this stage, the problem of numerically approximating
Zakai's equation corresponds to that of approximating the normalizing constant of a high-frequency state-space model, of which there is now a rather mature collection of methods
for doing so, for instance, based upon particle filters (PF).

Given a state-space model for which one can sample from the hidden Markov chain and evaluate the conditional likelihood of an observation given the state, a particle
filter provides consistent Monte Carlo estimates of the filter and unbiased estimates of the marginal likelihood; see for instance \cite{delmoral}. In the context of the model \eqref{eq:obs}-
\eqref{eq:state}, after discretization, many particle filter approaches have been suggested in the literature \cite{crisan_bain,delmoral,fearn}. We follow the methods considered in \cite{high_freq_ml}, who
apply multilevel particle filters for the approximation of the filtering problem. The multilevel Monte Carlo (MLMC) method e.g.~\cite{giles,giles1,hein} is often used for problems where one is interested in the estimation
of an expectation w.r.t.~a probability law that has been discretized, for instance the law of a diffusion at some given time $T$, which has been Euler discretized. The idea
is to present a telescoping sum representation of the expectation under a given precise discretization, in terms of differences of expectations of increasingly coarse discretizations.
If one can sample from appropriate couplings of the probability laws in the differences, then one can reduce the computational effort to achieve a pre-specified mean square error, versus simply considering approximating the expectation associated to the precise discretization by itself. Detailed reviews of these methods can be found in \cite{giles1,ml_rev}.

The methodology of this article concerns the estimation of the solution of Zakai's equation and in particular an estimate that is almost surely unbiased, in that the discretization error is removed from the estimate. The approach that we
present is based upon the unbiased methods of \cite{rhee} (see also \cite{mcl,matti}) combined with the multilevel methodology in \cite{high_freq_ml}. More precisely, we start by presenting a multilevel identity for the approximation of the solution of Zakai's equation which is biased, in terms of the discretization error. We prove that, for a particular implementation based on the algorithm in \cite{high_freq_ml},  in order to obtain a mean square
error of $\mathcal{O}(\epsilon^2)$, $\epsilon>0$, the computational effort required is $\mathcal{O}(\epsilon^{-3})$, versus $\mathcal{O}(\epsilon^{-4})$ if one does not use
a multilevel strategy. Then, given access to high-frequency data, we show how this identity can be randomized
to remove the discretization error of the multilevel method. We prove that our proposed estimators are unbiased and of finite variance. We also consider the computational effort
to produce our estimate relative to using multilevel approaches, both theoretically and numerically. In particular, we demonstrate that to be within $\epsilon>0$ of the true solution to
Zakai's equation (with high probability) one requires a computational effort of $\mathcal{O}(\epsilon^{-3}|\log_2(\epsilon)|^{3+\beta})$, for some $\beta>0$. Thus there is an
extra cost to pay for unbiasedness, relative to using multilevel methods. We remark however, that the unbiased method is exceptionally amenable to parallel implementation, especially relative to using afore-mentioned multilevel approach, and this element is not considered in our mathematical analysis. In addition, our unbiased methodology provides
a ground truth estimate, which can be useful if one resorts to estimation methods which exhibit discretization bias.

This article is structured as follows. In Section \ref{sec:review} we provide a review of the methodology to be used. In Section \ref{sec:method} our method is presented.
In Section \ref{sec:theory} we show that our method produces unbiased and finite variance estimators. We also present a result associated to a multilevel estimator.
In Section \ref{sec:numerics} our numerical results are presented. The appendix features technical results for the proofs of our theoretical results.

\section{Review of Relevant Methodology}\label{sec:review}

The following Section will provide a review of the methodology to be used in this article. The section is structured as follows. 
We first describe our notation in Section \ref{sec:notat}.
In Section 
\ref{sec:disc_model} we describe the discretized model that is the one that we will work with in practice. In Section \ref{sec:mlmc_rev}
we review the multilevel Monte Carlo method, which will be used in this article and is an approach which can reduce the cost of estimation, relative
to Monte Carlo, to achieve a given mean square error (MSE), particularly in problems which are subject to discretization. 
The next two Sections \ref{sec:pf_rev} and \ref{sec:cpf_rev} review methodology which can be used to implement MLMC for the class of problems considered in this article.
The final Section \ref{sec:mlpf_rev} summarizes MLMC implemented via particle and coupled particle filters (CPF); the multilevel particle filter (MLPF). Throughout the article we assume that all the random variables that are mentioned are well-defined on the measurable space $(\Omega,\mathcal{F})$.

\subsection{Notations}\label{sec:notat}

Let $(\mathsf{X},\mathcal{X})$ be a measurable space.
For $\varphi:\mathsf{X}\rightarrow\mathbb{R}$ we write $\mathcal{B}_b(\mathsf{X})$ as the collection of bounded measurable functions. 
Let $\varphi:\mathbb{R}^d\rightarrow\mathbb{R}$, $\textrm{Lip}_{\|\cdot\|_2}(\mathbb{R}^{d})$ denotes the collection of real-valued functions that are Lipschitz w.r.t.~$\|\cdot\|_2$ ($\|\cdot\|_p$ denotes the $\mathbb{L}_p-$norm of a vector $x\in\mathbb{R}^d$). That is, $\varphi\in\textrm{Lip}_{\|\cdot\|_2}(\mathbb{R}^{d})$ if there exists a $C<+\infty$ such that for any $(x,y)\in\mathbb{R}^{2d}$
$
|\varphi(x)-\varphi(y)| \leq C\|x-y\|_2.
$
We write $\|\varphi\|_{\textrm{Lip}}$ as the Lipschitz constant of a function $\varphi\in\textrm{Lip}_{\|\cdot\|_2}(\mathbb{R}^{d})$.
For $\varphi\in\mathcal{B}_b(\mathsf{X})$, we write the supremum norm $\|\varphi\|=\sup_{x\in\mathsf{X}}|\varphi(x)|$.
$\mathcal{P}(\mathsf{X})$  denotes the collection of probability measures on $(\mathsf{X},\mathcal{X})$.
For a measure $\mu$ on $(\mathsf{X},\mathcal{X})$
and a $\varphi\in\mathcal{B}_b(\mathsf{X})$, the notation $\mu(\varphi)=\int_{\mathsf{X}}\varphi(x)\mu(dx)$ is used. 
$\mathsf{B}(\mathbb{R}^d)$ denote the Borel sets on $\mathbb{R}^d$. $dx$ is used to denote the Lebesgue measure.
Let $K:\mathsf{X}\times\mathcal{X}\rightarrow[0,\infty)$ be a non-negative operator and $\mu$ be a measure then we use the notations
$
\mu K(dy) = \int_{\mathsf{X}}\mu(dx) K(x,dy)
$
and for $\varphi\in\mathcal{B}_b(\mathsf{X})$, 
$
K(\varphi)(x) = \int_{\mathsf{X}} \varphi(y) K(x,dy).
$
For $A\in\mathcal{X}$ the indicator is written $\mathbb{I}_A(x)$.
$\mathcal{N}_s(\mu,\Sigma)$ (resp.~$\psi_s(x;\mu,\Sigma)$)
denotes an $s-$dimensional Gaussian distribution (density evaluated at $x\in\mathbb{R}^s$) of mean $\mu$ and covariance $\Sigma$. If $s=1$ we omit the subscript $s$. For a vector/matrix $X$, $X^*$ is used to denote the transpose of $X$.
For $A\in\mathcal{X}$, $\delta_A(du)$ denotes the Dirac measure of $A$, and if $A=\{x\}$ with $x\in \mathsf{X}$, we write $\delta_x(du)$. 
$\mathcal{U}_A$ is used to denote the uniform distribution on a set $A$.
For a vector-valued function in $d-$dimensions (resp.~$d-$dimensional vector), $\varphi(x)$ (resp.~$x$) say, we write the $i^{\textrm{th}}-$component ($i\in\{1,\dots,d\}$) as $\varphi^{(i)}(x)$ (resp.~$x^{(i)}$). For a $d\times q$ matrix $x$ we write the $(i,j)^{\textrm{th}}-$entry as $x^{(ij)}$.
For $\mu\in\mathcal{P}(\mathsf{X})$ and $X$ a random variable on $\mathsf{X}$ with distribution associated to $\mu$ we use the notation $X\sim\mu(\cdot)$. 

\subsection{Discretized Model}\label{sec:disc_model}

The following section is taken from \cite{high_freq_ml}.
To minimize certain technical difficulties, the following assumption is made throughout the paper:
\begin{hypD}\label{hyp_diff:1}
We have:
\begin{enumerate}
\item{$\sigma^{(ij)}$ is bounded with $\sigma^{(ij)}\in\textrm{Lip}_{\|\cdot\|_2}(\mathbb{R}^{d_x})$, $(i,j)\in\{1,\dots,d_x\}^2$ and $a(x):=\sigma(x)\sigma(x)^*$ is uniformly elliptic.}
\item{$(h^{(i)},b^{(j)})$ are bounded and $(h^{(i)},b^{(j)})\in\textrm{Lip}_{\|\cdot\|_2}(\mathbb{R}^{d_x})\times\textrm{Lip}_{\|\cdot\|_2}(\mathbb{R}^{d_x})$, $(i,j)\in\{1,\dots,d_y\}\times\{1,\dots,d_x\}$.}
\end{enumerate}
\end{hypD}

In practice, we will have to work with a discretization of the model in \eqref{eq:obs}-\eqref{eq:state}. We will assume access to path of data $\{Y_t\}_{0\leq t \leq T}$ which is observed at a high frequency. 

Let $l\in\{0,1,\dots,\}$ be given and consider an Euler discretization of step-size $\Delta_l=2^{-l}$, $k\in\{1,2,\dots,2^lT\}$, $\widetilde{X}_{0}=x_*$:
\begin{eqnarray}
\widetilde{X}_{k\Delta_l} & = & \widetilde{X}_{(k-1)\Delta_l} + b(\widetilde{X}_{(k-1)\Delta_l})\Delta_l + \sigma(\widetilde{X}_{(k-1)\Delta_l})[W_{k\Delta_l}-W_{(k-1)\Delta_l}].\label{eq:disc_state}
\end{eqnarray}
It should be noted that the Brownian motion in \eqref{eq:disc_state} is the same as in \eqref{eq:state} under both $\mathbb{P}$ and $\overline{\mathbb{P}}$.
Then, for $k\in\{0,1,\dots\}$ define:
$$
G_{k}^l(x_{k\Delta_l}) := \exp\Big\{h(x_{k\Delta_l})^*(y_{(k+1)\Delta_l}-y_{k\Delta_l})-\frac{\Delta_l}{2}h(x_{k\Delta_l})^*h(x_{k\Delta_l})\Big\}
$$
and note that for any $T\in\mathbb{N}$
$$
Z_{T}^l(x_0,x_{\Delta_l},\dots,x_{T-\Delta_l}) := \prod_{k=0}^{2^lT-1}G_{k}^l(x_{k\Delta_l}) = \exp\Big\{\sum_{k=0}^{2^lT-1}\Big[h(x_{k\Delta_l})^*(y_{(k+1)\Delta_l}-y_{k\Delta_l})-\frac{\Delta_l}{2}h(x_{k\Delta_l})^*h(x_{k\Delta_l})\Big]\Big\}
$$
is simply a discretization of $Z_{T}$ (of the type of \cite{picard}).  Then set for $(t,\varphi)\in\mathbb{N}\times\mathcal{B}_b(\mathbb{R}^{d_x})$
\begin{eqnarray*}
\gamma_{t}^l(\varphi) & := & \overline{\mathbb{E}}\big[\varphi(X_t)Z_{t}^l(\widetilde{X}_0,\widetilde{X}_{\Delta_l},\dots,\widetilde{X}_{t-\Delta_l})|\mathcal{Y}_t\big] \\
\eta_{t}^l(\varphi) & := & \frac{\gamma_{t}^l(\varphi)}{\gamma_{t}^l(1)}.
\end{eqnarray*}
For notational convenience $\eta_0^l(dx) = \delta_{x_{*}}(dx)$. For 
$(l,p,t,\varphi)\in\mathbb{N}\times\{0,1,\dots\}\times\{\Delta_l,2\Delta_l,\dots,1-\Delta_l\}\times\mathcal{B}_b(\mathbb{R}^{d_x})$ one can also set
\begin{eqnarray*}
\gamma_{p+t}^l(\varphi) & := & \overline{\mathbb{E}}\Big[\varphi(X_{p+t})Z_{p}^l(\widetilde{X}_0,\widetilde{X}_{\Delta_l},\dots,\widetilde{X}_{p-\Delta_l})\Big(\prod_{k=0}^{t\Delta_l^{-1}-1}G_{p\Delta_l^{-1}+k}^l(\widetilde{X}_{p+k\Delta_l})\Big)\Big|\mathcal{Y}_{p+t}\Big] \\
\eta_{p+t}^l(\varphi) & := & \frac{\gamma_{p+t}^l(\varphi)}{\gamma_{p+t}^l(1)}
\end{eqnarray*}
where we define $Z_{0}^l(x_{-\Delta_l})=1$.

\subsection{Multilevel Monte Carlo}\label{sec:mlmc_rev}

In this section, to elaborate the methodology, we shall consider the estimation of $\eta_t^L(\varphi)$ for some fixed $(L,t,\varphi)\in\mathbb{N}_0\times\{\Delta_L,2\Delta_L,\dots\}\times\mathcal{B}_b(\mathbb{R}^d_{x})$. If it is possible, the Monte Carlo estimate of $\eta_t^L(\varphi)$ constitutes sampling $(X_t^{L,1},\dots,X_t^{L,N})$ i.i.d.~from $\eta_t^L$
and forming the estimate:
$$
\eta_{t,MC}^{L,N}(\varphi) := \frac{1}{N}\sum_{i=1}^N \varphi(x_t^{L,i}).
$$
In order to understand the error in estimation, one can consider the MSE:
$$
\mathbb{E}[(\eta_{t,MC}^{L,N}(\varphi)-\eta_{t}(\varphi))^2]
$$
where we note that the expectation operator $\mathbb{E}$ is that under $\mathbb{P}$. Then one has
$$
\mathbb{E}[(\eta_{t,MC}^{L,N}(\varphi)-\eta_{t}(\varphi))^2] \leq 2\Big(\mathbb{E}[(\eta_{t,MC}^{L,N}(\varphi)-\eta_{t}^L(\varphi))^2] + \mathbb{E}[(\eta_{t}^{L}(\varphi)-\eta_{t}(\varphi))^2]\Big).
$$
Now if, further, $\varphi\in\textrm{Lip}_{\|\cdot\|_2}(\mathbb{R}^{d_x})$ then using classical results in Monte Carlo estimation, (noting (D\ref{hyp_diff:1})) for the first expectation on the R.H.S.~and classical results on the bias of the Euler method (e.g.~\cite{picard}) for the second expectation on the R.H.S.~one has
\begin{equation}\label{eq:mc_mse_bd}
\mathbb{E}[(\eta_{t,MC}^{L,N}(\varphi)-\eta_{t}(\varphi))^2] \leq C\Big(\frac{1}{N} + \Delta_L\Big)
\end{equation}
where $C$ is a finite constant that may depend on $\varphi$ and $t$, but not $L$ nor $N$. For a given $\epsilon>0$ and assuming access to data with a high enough frequency, one can
choose $L$ so that $\Delta_L=\mathcal{O}(\epsilon^2)$ and choose $N=\mathcal{O}(\epsilon^{-2})$, so that the MSE is $\mathcal{O}(\epsilon^2)$. If the cost of simulation
of one sample is $\mathcal{O}(\Delta_L^{-1})$ as is often the case when working with Euler discretizations, then the cost to achieve this MSE is $\mathcal{O}(\epsilon^{-4})$ (the cost does not take into account the parameter $t$, which we shall not consider).

The MLMC method is associated to the telescoping sum identity:
\begin{equation}\label{eq:ml_id}
\eta_{t}^{L}(\varphi) = \eta_t^0(\varphi) + \sum_{l=1}^L[\eta_t^l- \eta_t^{l-1}](\varphi)
\end{equation}
where we are using the short-hand notation $[\eta_t^l- \eta_t^{l-1}](\varphi)=\eta_t^l(\varphi) - \eta_t^{l-1}(\varphi)$. We now explain how \eqref{eq:ml_id} can
be leveraged to reduce the cost to achieve an MSE of $\mathcal{O}(\epsilon^2)$.

The approach is to consider a method that can estimate $\eta_t^0(\varphi)$ and then, independently $[\eta_t^1- \eta_t^{0}](\varphi)$ and so on
until one independently estimates $[\eta_t^L- \eta_t^{L-1}](\varphi)$. The phrase independently, must be understood conditionally upon the data.
To estimate $\eta_t^0(\varphi)$, one can proceed just as above, when taking $L=0$. That is, one generates $(X_t^{0,1},\dots,X_t^{0,N_0})$ i.i.d.~from $\eta_t^0$
and forms the estimate $\eta_{t,MC}^{0,N_0}(\varphi) = \frac{1}{N_0}\sum_{i=1}^{N_0} \varphi(x_t^{0,i})$.

We now consider approximating $[\eta_t^l- \eta_t^{l-1}](\varphi)$ for $l\in\mathbb{N}$ fixed. We consider a (random) probability measure, $\check{\eta}_t^{l,l-1}$, on $(\mathbb{R}^{d_x}\times \mathbb{R}^{d_x},\mathsf{B}(\mathbb{R}^{d_x})\otimes \mathsf{B}(\mathbb{R}^{d_x}))$ such that for every $A\in \mathsf{B}(\mathbb{R}^{d_x})$ we have $\mathbb{P}-$almost surely
\begin{equation}\label{eq:coup_cond1}
\check{\eta}_t^{l,l-1}(A\times\mathbb{R}^{d_x}) = \eta_t^l(A) \quad\textrm{and}\quad \check{\eta}_t^{l,l-1}(\mathbb{R}^{d_x}\times A) = \eta_t^{l-1}(A).
\end{equation}
We will also \emph{assume} that  $\check{\eta}_t^{l,l-1}$ has the property that
\begin{equation}\label{eq:coup_cond}
\mathbb{E}\Big[
\int_{\mathbb{R}^{d_x}\times \mathbb{R}^{d_x}}\|x-\check{x}\|_2^2\check{\eta}_t^{l,l-1}(d(x,\check{x}))
\Big] \leq C \Delta_l^{\beta}
\end{equation}
where $C$ is a finite constant that does not depend upon $l$ and $\beta >0$ is a positive constant; we do not discuss the existence of $\check{\eta}_t^{l,l-1}$ as it is
used for purely illustrating the MLMC method, but such probabilities can exist; see for instance \cite{cpf_clt}. Note that the properties \eqref{eq:coup_cond1}-\eqref{eq:coup_cond} are the key for the method to be described - without them, it may not be of any practical use. The condition \eqref{eq:coup_cond} helps to achieve a variance reduction relative
to the Monte Carlo estimate as we will explain below.
Now one proceeds by sampling $((X_t^{l,1},\check{X}_t^{l-1,1}),\dots,(X_t^{l,N_l},\check{X}_t^{l-1,N_l}))$ i.i.d.~from $\check{\eta}_t^{l,l-1}$ and computing the estimate
$$
[\eta_t^l- \eta_t^{l-1}]_{MC}^{N_l}(\varphi) := \frac{1}{N_l}\sum_{i=1}^{N_l}[\varphi(x_t^{l,i}) - \varphi(\check{x}_t^{l-1,i})].
$$

The approximation of \eqref{eq:ml_id} is taken as:
$$
\eta_{t,MLMC}^{L,N_{0:L}}(\varphi) = \eta_{t,MC}^{0,N_0}(\varphi) + \sum_{l=1}^L [\eta_t^l- \eta_t^{l-1}]_{MC}^{N_l}(\varphi)
$$
where we stress that, conditional upon the data, the random variables $(\eta_{t,MC}^{0,N_0}(\varphi),[\eta_t^1- \eta_t^{0}]_{MC}^{N_1}(\varphi),\dots,[\eta_t^L- \eta_t^{L-1}]_{MC}^{N_L}(\varphi))$ are all independent and the notation $N_{0:L}=(N_0,\dots,N_L)$ is used. Now letting $\varphi\in\mathcal{B}_b(\mathbb{R}^{d_x})\cap\textrm{Lip}_{\|\cdot\|_2}(\mathbb{R}^{d_x})$, one can again consider the MSE
\begin{eqnarray*}
\mathbb{E}[(\eta_{t,MLMC}^{L,N_{0:L}}(\varphi)-\eta_{t}(\varphi))^2] & \leq & 2\Big(\mathbb{E}[(\eta_{t,MLMC}^{L,N_{0:L}}(\varphi)-\eta_{t}^L(\varphi))^2] + \mathbb{E}[(\eta_{t}^{L}(\varphi)-\eta_{t}(\varphi))^2]\Big) \\
& \leq & 2\Big(\mathbb{E}[(\eta_{t,MLMC}^{L,N_{0:L}}(\varphi)-\eta_{t}^L(\varphi))^2] + C\Delta_L\Big)
\end{eqnarray*}
where we have used the bias result that was applied to obtain \eqref{eq:mc_mse_bd}. Using standard results on sums of squares of random variables, along with the fact that, for $(l,q)\in\{1,\dots,L\}$, $l\neq q$
$$
\mathbb{E}\Big[\Big([\eta_t^l- \eta_t^{l-1}]_{MC}^{N_l}(\varphi) - [\eta_t^l- \eta_t^{l-1}](\varphi)\Big)
\Big([\eta_t^q- \eta_t^{q-1}]_{MC}^{N_q}(\varphi) - [\eta_t^q- \eta_t^{q-1}](\varphi)\Big)
\Big] = 0
$$
where we are using the conditional independence structure of the simulated random variables and \eqref{eq:coup_cond} one has 
$$
\mathbb{E}[(\eta_{t,MLMC}^{L,N_{0:L}}(\varphi)-\eta_{t}^L(\varphi))^2] \leq C\sum_{l=0}^{L}\frac{\Delta_l^{\beta}}{N_l}
$$
and thus
$$
\mathbb{E}[(\eta_{t,MLMC}^{L,N_{0:L}}(\varphi)-\eta_{t}(\varphi))^2] \leq C\Big(\sum_{l=0}^{L}\frac{\Delta_l^{\beta}}{N_l} + \Delta_L\Big)
$$
where, throughout, $C$ is a finite constant that does not depend upon $L$ or $N_{0:L}$. Suppose that $\beta$ as in \eqref{eq:coup_cond} is 1 and assume that the cost
of producing one sample from $\check{\eta}_t^{l,l-1}$ is $\mathcal{O}(\Delta_l^{-1})$. For a given $\epsilon>0$, using standard calculations (see e.g.~\cite{giles}), one can set 
$L=\mathcal{O}(|\log(\epsilon)|)$ (achieving a bias of $\mathcal{O}(\epsilon^2)$) and $N_l=\mathcal{O}(\epsilon^{-2}|\log(\epsilon)|\Delta_l)$, so that the MSE is $\mathcal{O}(\epsilon^2)$ and the cost to achieve this is $\mathcal{O}(\epsilon^{-2}\log(\epsilon)^2)$; a vast reduction over using the Monte Carlo method.

\subsection{Particle Filters}\label{sec:pf_rev}

The main objective of this section is to present a recursive and online method for approximating expectations $\eta_t^l(\varphi)$, where $(l,\varphi)\in\mathbb{N}_0\times\mathcal{B}_b(\mathbb{R}^d_{x})$ are fixed and $t\in\{\Delta_l,2\Delta_l,\dots\}$ is increasing. By `online' we mean that the approximation method will only use a computational cost that is fixed
for each $t$.

Particle filters are a simulation-based method that generates $N\in\mathbb{N}$ samples (or particles) in parallel. The algorithm constitutes two major steps, sampling and resampling. The sampling mechanism, is comprised of sampling the $N$ samples (conditionally) independently using the Euler dynamics. Then, as the Euler dynamics do not correspond to the true filter, one must correct for this fact, which is done using the operation of weighting and resampling. In this step all the samples will interact with each other.
PFs will produce estimates of $\eta_t^l(\varphi)$ which will converge almost surely as $N$ grows. In addition, one will also have an (almost surely) unbiased estimate of $\gamma_t^l(\varphi)$ - note that there is a discretization bias. See e.g.~\cite{crisan_bain,delmoral} for example.

The following concepts will be used in the algorithm to be described. Set, with $p\in\mathbb{N}_0$
$$
\mathbf{G}_p^l(x_{p:p+1-\Delta_l}) := \prod_{k=0}^{\Delta_l^{-1}-1}G_{p\Delta_l^{-1}+k}^l(x_{p+k\Delta_l})
$$
where we use the notation $x_{p:p+1-\Delta_l}=(x_p,x_{p+\Delta_l},x_{p+2\Delta_l},\dots,x_{p+1-\Delta_l})$.
This quantity will be used to correct samples generated from the Euler dynamics, to those which can be used to approximate the filter. Let $E_l=(\mathbb{R}^{d_x})^{\Delta_l^{-1}+1}$, and denote by 
$M^l:\mathbb{R}^{d_x}\rightarrow\mathcal{P}(E_l)$ the joint Markov transition of $(x_0,x_{\Delta_l},\dots,x_{1})$ defined via the Euler discretization \eqref{eq:disc_state} and a Dirac on a point $x\in\mathbb{R}^{d_x}$:  for $(x,\varphi)\in\mathbb{R}^{d_x}\times\mathcal{B}_b(E_l)$, 
$$
M^l(\varphi)(x) := \int_{E_l}\varphi(x_0,x_{\Delta_l},\dots,x_{1})\delta_x(dx_0)\Big[\prod_{k=1}^{\Delta_l^{-1}}\psi_{d_x}(x_{k\Delta_l};x_{(k-1)\Delta_l} + b(x_{(k-1)\Delta_l})\Delta_l ,a(x_{(k-1)\Delta_l})\Delta_l)\Big]d(x_{\Delta_l},\dots,x_{1}).
$$
This Markov transition kernel is the one that will be used to sample the process in-between weighting and resampling operations. 
We remark that the presence of the dirac mass $\delta_x$ is only used to keep consistency with \cite{high_freq_ml} as we will rely upon the theoretical results in that article. In practice
one does not need the dirac mass.
The algorithm is presented in details in Algorithm \ref{alg:pf}. In step 1.~of the algorithm, we generate $N$ samples independently from the Euler-dynamics. In step 2.~each sample is propagated by sampling from the probability measure \eqref{eq:sel_mutat}. This sampling encapsulates resampling and then sampling, first one computes the weight functions $\mathbf{G}_{p-1}^l$ and one selects a position $x_p^{l,i}$, from which to move the sample, with probability equal to
$$
\frac{\mathbf{G}_{p-1}^l(x_{p-1:p-\Delta_l}^{l,i})}{\sum_{j=1}^N \mathbf{G}_{p-1}^l(x_{p-1:p-\Delta_l}^{l,j})}.
$$
The sample is then moved according to the Markov kernel $M^l(x_p^{l,i},\cdot)$. If one wants to estimate $\eta_t^l(\varphi)$, $t\in\mathbb{N}$, then the estimate
\begin{equation}\label{eq:filt_pf}
\eta_{t,PF}^{l,N}(\varphi) := \sum_{i=1}^N \frac{\mathbf{G}_{t-1}^l(x_{t-1:t-\Delta_l}^{l,i})}{\sum_{j=1}^N \mathbf{G}_{t-1}^l(x_{t-1:t-\Delta_l}^{l,j})}\varphi(x_t^{l,i})
\end{equation}
is used and be computed after (the appropriate) step 2.~(or step 1.) in Algorithm \ref{alg:pf}. The estimate for valid non-integer $t$ is given in \cite{high_freq_ml}. An almost surely unbiased estimate of $\gamma_t^l(\varphi)$, $t\in\mathbb{N}$ is
$$
\gamma_{t,PF}^{l,N}(\varphi) := \Big[\prod_{p=0}^{t-2} \Big(\frac{1}{N}\sum_{i=1}^N \mathbf{G}_p^l(x_{p:p+1-\Delta_l}^{l,i})\Big)\Big]\Big(\frac{1}{N}\sum_{i=1}^N \mathbf{G}_{t-1}^l(x_{t-1:t-\Delta_l}^{l,i})\varphi(x_t^{l,i})\Big)
$$
and is again computed after (the appropriate) step 2.~(or step 1.) in Algorithm \ref{alg:pf}.

\begin{algorithm}[t]
\begin{enumerate}
\item{Initialize: For $i\in\{1,\dots,N\}$, generate $(x_0^{l,i},\dots,x_1^{l,i})$ from $M^l(x_{*},\cdot)$. Set $p=1$.}
\item{Update: For $i\in\{1,\dots,N\}$, generate $(x_p^{l,i},\dots, x_{p+1}^{l,i})$ from  
\begin{equation}\label{eq:sel_mutat}
\sum_{i=1}^N \frac{\mathbf{G}_{p-1}^l(x_{p-1:p-\Delta_l}^{l,i})}{\sum_{j=1}^N \mathbf{G}_{p-1}^l(x_{p-1:p-\Delta_l}^{l,j})}M^l(x_p^{l,i},\cdot).
\end{equation}
Set $p=p+1$ and return to the start of 2..}
\end{enumerate}
\caption{Particle Filter.}
\label{alg:pf}
\end{algorithm}

\subsection{Coupled Particle Filters}\label{sec:cpf_rev}

The main objective of this section is to present a recursive and online method for approximating expectations $[\eta_t^l-\eta_t^{l-1}](\varphi)$, where $(l,\varphi)\in\mathbb{N}\times\mathcal{B}_b(\mathbb{R}^{d_{x}})$ are fixed and $t\in\{\Delta_l,2\Delta_l,\dots\}$ is increasing. More precisely, we will seek to approximate expectations w.r.t.~probabilities 
$\check{\eta}_t^{l,l-1}$ as described in Section \ref{sec:mlmc_rev} with properties \eqref{eq:coup_cond1}-\eqref{eq:coup_cond}. In general, these probabilities are quite complex (see \cite{cpf_clt}), so we shall explain an approximation scheme that will correlate or couple the two steps in a particle filter. This approach will induce a sequence of targets $\check{\eta}_t^{l,l-1}$, $t\in\{\Delta_l,2\Delta_l,\dots\}$ which will possess the properties \eqref{eq:coup_cond1}-\eqref{eq:coup_cond}, but we will not discuss the details of these probabilities;
again information can be found in \cite{cpf_clt}.

The coupled particle filter will generate pairs of paths of the discretized diffusion, using $N$ samples simulated in parallel. The algorithm has two steps; coupled sampling and coupled resampling. The coupled sampling step constitutes sampling coupled Euler paths at the two levels of discretization. We describe the simulation of a Markov kernel $\check{P}^l:\mathbb{R}^{d_x}\times\mathbb{R}^{d_x}\rightarrow\mathcal{P}((\mathbb{R}^{d_x})^{\Delta_l^{-1}}\times (\mathbb{R}^{d_x})^{\Delta_{l-1}^{-1}})$ on paths $(x_{\Delta_l},\dots,x_1)$ and $(\check{x}_{\Delta_{l-1}},\dots,\check{x}_{1})$ (with initial points $(x,\check{x})\in\mathbb{R}^{2d_x}$) which provides a coupling of the Euler discretizations in Algorithm \ref{alg:coup_eul}. Let $\check{M}^l:\mathbb{R}^{d_x}\times\mathbb{R}^{d_x}\rightarrow\mathcal{P}(E_l\times E_{l-1})$ be a Markov kernel defined for $(u,\check{v},\varphi)\in\mathbb{R}^{d_x}\times\mathbb{R}^{d_x}\times\mathcal{B}_b(E_l\times E_{l-1})$ ($(u,\check{v})$ are the initial points of the kernel)
$$
\check{M}^l(\varphi)\Big((u,\check{v})\Big) := \int_{E_l\times E_{l-1}} \varphi(u^l,u^{l-1})\delta_u(dx_0^l)\delta_{\check{v}}(d\check{x}_0^{l-1}) \check{P}^l\Big((x_0^l, \check{x}_0^{l-1}) ,d((x_{\Delta_l}^l,\dots,x_1^l),(\check{x}_{\Delta_{l-1}}^{l-1},\dots,\check{x}_1^{l-1}))\Big)
$$
where we have used the notation $(u^l,\check{u}^{l-1}) = \big((x_0^l,x_{\Delta_l}^l,\dots,x_1^l),(\check{x}_0^{l-1},\check{x}_{\Delta_{l-1}}^{l-1},\dots,\check{x}_1^{l-1})\big)$.
This is the coupled simulation that we will use. Again, the dirac masses are only used for consistency with \cite{high_freq_ml} and are not needed in practice.

The coupled particle filter will generate pairs of trajectories $\big((x_p^{l,i},x_{p+\Delta_l}^{l,i},\dots,x_{p+1}^{l,i}),(\check{x}_p^{l-1,i},\check{x}_{p+\Delta_{l-1}}^{l-1,i},\dots,\check{x}_{p+1}^{l-1,i})\big)$ with $p\in\mathbb{N}_0$. The idea will be to resample these trajectories at times $1,2,\dots$ so that the trajectory $(x_p^{l,i},x_{p+\Delta_l}^{l,i},\dots,x_{p+1}^{l,i})$ is resampled using the probability distribution on $\{1,\dots,N\}$ as
$$
\frac{\mathbf{G}_{p}^l(x_{p:p+1-\Delta_l}^{l,i})}{\sum_{j=1}^N \mathbf{G}_{p}^l(x_{p:p+1-\Delta_l}^{l,j})}
$$
and  the trajectory $(\check{x}_p^{l-1,i},\check{x}_{p+\Delta_{l-1}}^{l-1,i},\dots,\check{x}_{p+1}^{l-1,i})$ is resampled using the probability distribution on $\{1,\dots,N\}$ as
$$
\frac{\mathbf{G}_{p}^{l-1}(\check{x}_{p:p+1-\Delta_l}^{l-1,i})}{\sum_{j=1}^N \mathbf{G}_{p}^{l-1}(\check{x}_{p:p+1-\Delta_l}^{l-1,j})}
$$
but that the sampling of the pair of indices on $\{1,\dots,N\}$ is not independent. The reason for this is that one would like to obtain a property such as \eqref{eq:coup_cond} for the limiting distribution $\check{\eta}^{l,l-1}_t$, which is seldom possible if the resampling operation is independent between pairs of trajectories (see e.g.~\cite{cpf_clt}). In Algorithm \ref{alg:max_coup} we describe one way to achieve this (re)sampling for two generic probability mass functions on $\{1,\dots,N\}$; the coupling is called the maximal coupling.

\begin{algorithm}[t]
\begin{enumerate}
\item{Generate $(V_{\Delta_l},V_{2\Delta_l}, \dots,V_1)$, where, for $k\in\{1,\dots,\Delta_l^{-1}\}$, $V_{k\Delta_l} \stackrel{\textrm{i.i.d.}}{\sim}\mathcal{N}_{d_x}(0,\Delta_l)$.}
\item{Set $x_0=x$ and compute the recursion: $x_{k\Delta_l}  =  x_{(k-1)\Delta_l} + b(x_{(k-1)\Delta_l})\Delta_l + \sigma(x_{(k-1)\Delta_l})V_{k\Delta_l}$, 
$k\in\{1,\dots,\Delta_l^{-1}\}$.}
\item{Set $\check{x}_0=\check{x}$ and compute the recursion: $\check{x}_{k\Delta_{l-1}}  =  \check{x}_{(k-1)\Delta_{l-1}} + b(\check{x}_{(k-1)\Delta_{l-1}})\Delta_{l-1} + \sigma(\check{x}_{(k-1)\Delta_{l-1}})[V_{(2k-1)\Delta_l} + 
V_{2k\Delta_l}]$, $k\in\{1,\dots,\Delta_{l-1}^{-1}\}$.}
\end{enumerate}
\caption{Simulating Coupled Euler Discretizations. The initial point of the two Euler paths is $(x,\check{x})\in\mathbb{R}^{2d_x}$.}
\label{alg:coup_eul}
\end{algorithm}

\begin{algorithm}[t]
\begin{enumerate}
\item{Input: Two probability mass functions (PMFs) $(r_1^1,\dots,r_1^N)$ and $(r_2^1,\dots,r_2^N)$ on $\{1,\dots,N\}$.}
\item{Generate $U\sim\mathcal{U}_{[0,1]}$.}
\item{If $U<\sum_{i=1}^N \min\{r_1^i,r_2^i\}=:\bar{r}$ then generate $i\in\{1,\dots,N\}$ according to the probability mass function:
$$
r_3^i = \frac{1}{\bar{r}} \min\{r_1^i,r_2^i\}
$$
and set $j=i$.}
\item{Otherwise generate $i\in\{1,\dots,N\}$ and $j\in\{1,\dots,N\}$ independently according to the probability mass functions 
$$
r_4^i = \frac{1}{1-\bar{r}} (r_1^i - \min\{r_1^i,r_2^i\})
$$
and
$$
r_5^j = \frac{1}{1-\bar{r}} (r_2^j - \min\{r_1^j,r_2^j\})
$$
respectively.
}
\item{Output: $(i,j)\in\{1,\dots,N\}^2$. $i$, marginally has PMF $r_1^i$ and $j$, marginally has PMF $r_2^j$.}
\end{enumerate}
\caption{Simulating a Maximal Coupling of Two Probability Mass Functions on $\{1,\dots,N\}$.}
\label{alg:max_coup}
\end{algorithm}

\begin{algorithm}[t]
\begin{enumerate}
\item{Initialize: For $i\in\{1,\dots,N\}$, generate $\big((x_0^{l,i},\dots,x_1^{l,i}),(\check{x}_0^{l-1,i},\dots,\check{x}_1^{l-1,i})\big)$ from $\check{M}^l\Big((x_{*},x_{*}),\cdot\Big)$. Set $p=1$.}
\item{Update: For $i\in\{1,\dots,N\}$, generate two indices $(s_{p-1}^{l,i},\check{s}_{p-1}^{l-1,i})$ by using Algorithm \ref{alg:max_coup} with input probability mass functions
$$
\Bigg(\frac{\mathbf{G}_{p-1}^l(x_{p-1:p-\Delta_l}^{l,1})}{\sum_{j=1}^N \mathbf{G}_{p-1}^l(x_{p-1:p-\Delta_l}^{l,j})},\dots,\frac{\mathbf{G}_{p-1}^l(x_{p-1:p-\Delta_l}^{l,N})}{\sum_{j=1}^N \mathbf{G}_{p-1}^l(x_{p-1:p-\Delta_l}^{l,j})}\Bigg)
$$
and
$$
\Bigg(\frac{\mathbf{G}_{p-1}^{l-1}(\check{x}_{p-1:p-\Delta_{l-1}}^{l-1,1})}{\sum_{j=1}^N \mathbf{G}_{p-1}^{l-1}(\check{x}_{p-1:p-\Delta_{l-1}}^{l-1,j})},\dots,\frac{\mathbf{G}_{p-1}^{l-1}(\check{x}_{p-1:p-\Delta_{l-1}}^{l-1,N})}{\sum_{j=1}^N \mathbf{G}_{p-1}^{l-1}(\check{x}_{p-1:p-\Delta_{l-1}}^{l-1,j})}\Bigg).
$$
Then generate $\big((x_p^{l,i},\dots,x_{p+1}^{l,i}),(\check{x}_{p}^{l-1,i},\dots,\check{x}_{p+1}^{l-1,i})\big)$ from $\check{M}^l\Big((x_{p}^{l,s_{p-1}^{l,i}},\check{x}_{p}^{l-1,\check{s}_{p-1}^{l-1,i}}),\cdot\Big)$. 
Set $p=p+1$ and return to the start of 2..}
\end{enumerate}
\caption{Coupled Particle Filter.}
\label{alg:cpf}
\end{algorithm}

Given the ideas in Algorithms \ref{alg:coup_eul} and \ref{alg:max_coup} we can now detail the coupled particle filter, which is given in Algorithm \ref{alg:cpf}. One can compute 
an estimate of $[\eta_t^l-\eta_t^{l-1}](\varphi)$ for $t\in\mathbb{N}$ as
\begin{equation}\label{eq:diff_cpf}
[\eta_t^l-\eta_t^{l-1}]^N_{CPF}(\varphi) := \sum_{i=1}^N \frac{\mathbf{G}_{t-1}^l(x_{t-1:t-\Delta_l}^{l,i})}{\sum_{j=1}^N \mathbf{G}_{t-1}^l(x_{t-1:t-\Delta_l}^{l,j})}\varphi(x_t^{l,i}) -
\sum_{i=1}^N \frac{\mathbf{G}_{t-1}^{l-1}(\check{x}_{t-1:t-\Delta_{l-1}}^{l-1,i})}{\sum_{j=1}^N \mathbf{G}_{t-1}^{l-1}(\check{x}_{t-1:t-\Delta_{l-1}}^{l-1,j})}\varphi(\check{x}_t^{l-1,i})
\end{equation}
 after (the appropriate) step 2. (or step 1.)~in Algorithm \ref{alg:cpf}. In addition, one can compute an almost surely unbiased estimate (we will prove this in Section \ref{sec:mlpf_rev}) of $[\gamma_t^l-\gamma_t^{l-1}](\varphi)$ for $(t,\varphi)\in\mathbb{N}\times\mathcal{B}_b(\mathbb{R}^{d_x})$ as 
 \begin{eqnarray*}
 [\gamma_t^l-\gamma_t^{l-1}]^N_{CPF}(\varphi) & := &
 \Big[\prod_{p=0}^{t-2} \Big(\frac{1}{N}\sum_{i=1}^N \mathbf{G}_p^l(x_{p:p+1-\Delta_l}^{l,i})\Big)\Big]\Big(\frac{1}{N}\sum_{i=1}^N \mathbf{G}_{t-1}^l(x_{t-1:t-\Delta_l}^{l,i})\varphi(x_t^{l,i})\Big) - \\ & & 
 \Big[\prod_{p=0}^{t-2} \Big(\frac{1}{N}\sum_{i=1}^N \mathbf{G}_p^{l-1}(\check{x}_{p:p+1-\Delta_{l-1}}^{l-1,i})\Big)\Big]\Big(\frac{1}{N}\sum_{i=1}^N \mathbf{G}_{t-1}^{l-1}(\check{x}_{t-1:t-\Delta_{l-1}}^{l-1,i})\varphi(\check{x}_t^{l-1,i})\Big)
 \end{eqnarray*}
  after (the appropriate) step 2. (or step 1.)~in Algorithm \ref{alg:cpf}.

\subsection{Multilevel Particle Filter}\label{sec:mlpf_rev}

To summarize, one can implement multilevel estimates of the filter and the solution to Zakai's equation in the following manner, which we call the MLPF.
\begin{enumerate}
\item{Run the particle filter in Algorithm \ref{alg:pf} for $l=0$ and $N=N_0$ and up-to the desired time.}
\item{For $l\in\{1,\dots,L\}$, independently of all of other sampling, run Algorithm \ref{alg:cpf} with $N=N_l$ and up-to the desired time.}
\end{enumerate}
The MLPF estimate of $\eta_t^L(\varphi)$, for $(t,\varphi)\in(\mathbb{N},\mathcal{B}_b(\mathbb{R}^{d_x}))$, is then
\begin{equation}\label{eq:filt_mlpf}
\eta_{t,MLPF}^{L,N_{0:L}}(\varphi) := \eta_{t,PF}^{0,N_0}(\varphi) + \sum_{l=1}^L[\eta_t^l-\eta_t^{l-1}]^{N_l}_{CPF}(\varphi)
\end{equation} 
where the first term on the R.H.S.~is defined in \eqref{eq:filt_pf} and the summands on the R.H.S.~are defined in \eqref{eq:diff_cpf}.  
For $\epsilon>0$ given and $\sigma$ is a non-constant function, consider \eqref{eq:filt_mlpf}, when one chooses $L=\mathcal{O}(|\log(\epsilon|))$, $N_l=\mathcal{O}(\epsilon^{-2}\Delta_l^{-1/4}\Delta_l^{3/4})$. In \cite{high_freq_ml}, it is proved under assumptions, one can achieve a MSE of $\mathcal{O}(\epsilon^2)$ for a cost of $\mathcal{O}(\epsilon^{-3})$. This is better than using a particle filter to approximate $\eta_t^L(\varphi)$ which has a MSE of $\mathcal{O}(\epsilon^2)$ for a cost of $\mathcal{O}(\epsilon^{-4})$; see \cite{high_freq_ml}.

To estimate the solution of Zaki's equation, one can use an approach in \cite{mlpf_nc} that was not considered in \cite{high_freq_ml}:
\begin{equation}\label{eq:zakai_ml}
\gamma_{t,MLPF}^{L,N_{0:L}}(\varphi) := \gamma_{t,PF}^{0,N_0}(\varphi) + \sum_{l=1}^L[\gamma_t^l-\gamma_t^{l-1}]^{N_l}_{CPF}(\varphi).
\end{equation}
This estimator will be both mathematically analyzed in this article and implemented in our numerical examples. In terms of the former, we will compute meaningful bounds on the MSE in terms of $N_{0:L}$ and $(\Delta_0,\dots,\Delta_L)$. Before continuing, the following result will be very useful in the subsequent discussion; the proof can be found in Appendix \ref{app:proofs}, although it is essentially a direct corollary of \cite[Theorem 7.4.2.]{delm:04}.

\begin{prop}\label{prop:ub_nc}
Assume (D1). Then we have for any $(t,L,,N_{0:L},\varphi)\in\mathbb{N}\times\mathbb{N}\times\mathbb{N}^{L+1}\times\mathcal{B}_b(\mathbb{R}^{d_x})$ that almost surely:
$$
\mathbb{\overline{E}}[\gamma_{t,MLPF}^{L,N_{0:L}}(\varphi)|\mathcal{Y}_t] = \gamma_t^L(\varphi).
$$
\end{prop}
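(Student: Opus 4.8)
The plan is to exploit the linearity of the conditional expectation together with the telescoping structure of the definition \eqref{eq:zakai_ml}. Writing
$$
\overline{\mathbb{E}}[\gamma_{t,MLPF}^{L,N_{0:L}}(\varphi)|\mathcal{Y}_t] = \overline{\mathbb{E}}[\gamma_{t,PF}^{0,N_0}(\varphi)|\mathcal{Y}_t] + \sum_{l=1}^L \overline{\mathbb{E}}\big[[\gamma_t^l-\gamma_t^{l-1}]^{N_l}_{CPF}(\varphi)\big|\mathcal{Y}_t\big],
$$
it suffices to establish two facts: first, that the single-level particle filter estimator is conditionally unbiased, $\overline{\mathbb{E}}[\gamma_{t,PF}^{l,N}(\varphi)|\mathcal{Y}_t]=\gamma_t^l(\varphi)$ for every level $l$ and every $N$; and second, that the coupled particle filter estimator of the difference is conditionally unbiased, $\overline{\mathbb{E}}\big[[\gamma_t^l-\gamma_t^{l-1}]^{N}_{CPF}(\varphi)\big|\mathcal{Y}_t\big]=\gamma_t^l(\varphi)-\gamma_t^{l-1}(\varphi)$. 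Granting these, the sum telescopes and collapses to $\gamma_t^0(\varphi)+\sum_{l=1}^L(\gamma_t^l(\varphi)-\gamma_t^{l-1}(\varphi))=\gamma_t^L(\varphi)$, as claimed.

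For the first fact I would work conditionally on $\mathcal{Y}_t$. Under $\overline{\mathbb{P}}$ the data process $\{Y_s\}$ is an independent Brownian motion, so after conditioning the increments appearing in each potential $\mathbf{G}_p^l$ are frozen and $\mathbf{G}_p^l$ becomes a deterministic, nonnegative function of the state trajectory; boundedness of $h$ in (D\ref{hyp_diff:1}) guarantees these potentials are bounded, so all expectations are finite. The estimator $\gamma_{t,PF}^{l,N}(\varphi)$ is then exactly the standard Feynman--Kac particle approximation of the normalizing constant associated to the mutation kernel $M^l$ and potentials $\mathbf{G}_p^l$ generated by Algorithm \ref{alg:pf}, and its conditional unbiasedness is the classical product-form identity, i.e.\ \cite[Theorem 7.4.2.]{delm:04}.

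The heart of the argument is the second fact. Here the two particle systems at levels $l$ and $l-1$ are correlated through the maximal coupling of the resampling indices in Algorithm \ref{alg:cpf}. The key observation, which I would state and verify carefully, is that this coupling does not alter the marginal law of either system: by the output property of Algorithm \ref{alg:max_coup}, the index $s_{p-1}^{l,i}$ marginally has the level-$l$ resampling PMF and $\check{s}_{p-1}^{l-1,i}$ marginally has the level-$(l-1)$ resampling PMF. Consequently, viewed on its own, the level-$l$ particle system is distributed exactly as a single-level particle filter run by Algorithm \ref{alg:pf} at level $l$, and likewise at level $l-1$. Since $[\gamma_t^l-\gamma_t^{l-1}]^{N}_{CPF}(\varphi)$ is by definition the difference of the two single-level normalizing-constant estimators $\gamma_{t,PF}^{l,N}(\varphi)$ and $\gamma_{t,PF}^{l-1,N}(\varphi)$ built from these two marginals, linearity of the conditional expectation and the first fact applied to each marginal yield the required identity, irrespective of how the two systems are coupled.

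The main obstacle I anticipate is precisely this marginal-preservation claim: one must argue that correlating the resampling indices across the two levels leaves untouched the law of each marginal particle evolution, so that the single-level unbiasedness result is genuinely applicable to each summand. This is intuitively clear from the maximal coupling construction but requires a clean inductive argument over the resampling times $p=1,\dots,t-1$, checking at each step that conditioning on the coarse system does not bias the fine marginal and vice versa. Once this is in place, the remainder is routine bookkeeping and the telescoping sum.
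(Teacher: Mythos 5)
Your argument is correct, but it is organized differently from the paper's. The paper proves both the particle filter and the coupled particle filter statements in one stroke via the martingale decompositions \eqref{eq:mart_nc} and \eqref{eq:mart_nc_cpf} taken from \cite[Theorem 7.4.2.]{delm:04}: conditioning successively on the (joint) particle filtrations $\mathcal{H}_s^l$, resp.~$\mathcal{\check{H}}_s^l$, each increment $[\pi_p^{l,N}-\Phi_p^l(\pi_{p-1}^{l,N})](\cdot)$ is conditionally centred, so the whole error telescopes to zero given $\mathcal{Y}_t$. You instead reduce the coupled case to two applications of the single-level result by arguing that the maximal coupling and the coupled Euler kernel $\check{M}^l$ preserve the marginal law of each particle system, so that each of the two product-form estimators inside $[\gamma_t^l-\gamma_t^{l-1}]^{N}_{CPF}(\varphi)$ is distributed, conditionally on $\mathcal{Y}_t$, exactly as a single-level estimator $\gamma_{t,PF}^{l,N}(\varphi)$, resp.~$\gamma_{t,PF}^{l-1,N}(\varphi)$. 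Both routes ultimately rest on the same two facts --- the unbiasedness of the product-form normalizing constant estimator and the marginal property of Algorithms \ref{alg:max_coup} and \ref{alg:coup_eul} --- but you make the marginal-preservation step explicit (and correctly identify it as the crux, to be verified by induction over the resampling times: the conditional law of the fine update given the \emph{joint} past depends only on the fine past and equals the level-$l$ PF update kernel, and symmetrically for the coarse system), whereas the paper absorbs this silently into the martingale property of \eqref{eq:mart_nc_cpf} under the joint filtration. Your version is arguably the more elementary proof of unbiasedness taken on its own; the paper's has the advantage that the decomposition \eqref{eq:mart_nc_cpf} is needed anyway for the $\mathbb{L}_q$ bounds of Proposition \ref{prop:cpf_bound}, so unbiasedness falls out of machinery already in place.
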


\begin{rem}
The proof shows that for any $(t,N_0,\varphi)\in\mathbb{N}\times\mathbb{N}\times\mathcal{B}_b(\mathbb{R}^{d_x})$, almost surely
$$
\mathbb{\overline{E}}[\gamma_{t,PF}^{0,N_0}(\varphi)|\mathcal{Y}_t] = \gamma_t^0(\varphi)
$$
and for any $(t,l,N_l,\varphi)\in\mathbb{N}\times\mathbb{N}\times\mathbb{N}\times\mathcal{B}_b(\mathbb{R}^{d_x})$ almost surely that
$$
\mathbb{\overline{E}}[[\gamma_t^l-\gamma_t^{l-1}]^{N_l}_{CPF}(\varphi)|\mathcal{Y}_t] = [\gamma_t^l-\gamma_t^{l-1}](\varphi).
$$
\end{rem}

\begin{rem}
The support of $t$ can be increased; see \cite{high_freq_ml} for details.
\end{rem}

\section{Method for Unbiased Estimation}\label{sec:method}

Our objective is to compute an almost surely unbiased estimator of $\gamma_t(\varphi)$, where we shall constrain ourselves to the case that $(t,\varphi)\in\mathbb{N}\times\mathcal{B}_b(\mathbb{R}^{d_x})\cap\textrm{Lip}_{\|\cdot\|_2}(\mathbb{R}^{d_x})$. We note that the constraints on $t$ can easily be relaxed (see \cite{high_freq_ml}), but, for notational ease, we
maintain the convention that $t\in\mathbb{N}$. The approach that we use to construct our estimators is to combine the PF and CPF methodology considered in Sections \ref{sec:pf_rev} and \ref{sec:cpf_rev} along with the unbiased estimation methods of \cite{rhee} (see also \cite{mcl, matti}). To achieve this, one needs the following scenario:
\begin{quote}
There exist a sequence of independent random variables $(\Psi_t^l(\varphi))_{l\geq 0}$ such that for any $(t,\varphi)\in\mathbb{N}\times\mathcal{B}_b(\mathbb{R}^{d_x})$,  almost surely:
$$
\mathbb{\overline{E}}[\Psi_t^0(\varphi)|\mathcal{Y}_t] = \gamma_t^0(\varphi)
$$
and for any  $(t,l,\varphi)\in\mathbb{N}\times\mathbb{N}\times\mathcal{B}_b(\mathbb{R}^{d_x})$,  almost surely:
$$
\mathbb{\overline{E}}[\Psi_t^l(\varphi)|\mathcal{Y}_t] = \gamma_t^l(\varphi).
$$
\end{quote}
The existence of such random variables are assured, by the discussion in Section \ref{sec:mlpf_rev}. More precisely, we will set
$(t,N,\varphi)\in\mathbb{N}\times\mathbb{N}\times\mathcal{B}_b(\mathbb{R}^{d_x})\cap\textrm{Lip}_{\|\cdot\|_2}(\mathbb{R}^{d_x})$,
\begin{equation}\label{eq:psi0_def}
\Psi_t^0(\varphi) := \gamma_{t,PF}^{0,N}(\varphi)
\end{equation}
and for any  $(t,l,N,\varphi)\in\mathbb{N}\times\mathbb{N}\times\mathbb{N}\times\mathcal{B}_b(\mathbb{R}^{d_x})\cap\textrm{Lip}_{\|\cdot\|_2}(\mathbb{R}^{d_x})$,  almost surely:
\begin{equation}\label{eq:psil_def}
\Psi_t^l(\varphi) = [\gamma_t^l-\gamma_t^{l-1}]^{N}_{CPF}(\varphi).
\end{equation}
In the construction to be considered, we choose $N$ independently of the value of $l$.
We also require that
$$
\lim_{l\rightarrow\infty}\mathbb{\overline{E}}[\gamma_t^l(\varphi)] = \mathbb{\overline{E}}[\gamma_t(\varphi)].
$$
but this is assured by Lemma \ref{lem:disc_zakai} 2.~(in the appendix).

Now, let $P(l)$ be a positive probability mass function on $\{0,1,\dots\}$ (independent of $(\Psi_t^l(\varphi))_{l\geq 0}$) and $Q(l)=\sum_{q=l}^{\infty} P(q)$, $l\in\{0,1,\dots,\}$, then we propose two estimators whose calculation is detailed in Algorithms \ref{alg:st} and \ref{alg:cs} and the estimators are expressed in equations \eqref{eq:st_0}-\eqref{eq:cs}. 
Algorithm \ref{alg:st} constructs the single term (ST) estimator in \cite{rhee} and Algorithm \ref{alg:cs} the coupled sum (CS) estimator of the same article.
We will discuss how to choose $P(l)$ in the next section.
In practice one would repeat  Algorithms \ref{alg:st} and \ref{alg:cs} independently $M$ times (in parallel) and, denoting the $i^{th}-$estimator $\gamma_{t,ST}(\varphi)^i$ (resp.~($\gamma_{t,CS}(\varphi)^i$) one would use the estimate
$$
\gamma_{t,ST}^M(\varphi) = \frac{1}{M}\sum_{i=1}^M \gamma_{t,ST}(\varphi)^i \quad\textrm{resp.}\quad
\gamma_{t,CS}^M(\varphi) = \frac{1}{M}\sum_{i=1}^M \gamma_{t,CS}(\varphi)^i.
$$
As noted, the implementation is straight-forwardly made parallel and is one of the attractions of the approach. The main issue is then to verify that the estimators 
$\gamma_{t,ST}(\varphi)$ and $\gamma_{t,CS}(\varphi)$ are almost surely unbiased with finite variance and to discuss the cost for achieving this; we consider this in the next section.

\begin{algorithm}[b]
\begin{enumerate}
\item{Generate $L\in\{0,1,\dots\}$ using $P(\cdot)$.}
\item{If $L=0$ run the particle filter, as in Algorithm \ref{alg:pf}, with $l=0$ and $N$ samples. Return the estimator, for each $t\in\mathbb{N}$
\begin{equation}\label{eq:st_0}
\gamma_{t,ST}(\varphi) = \frac{\Psi_t^0(\varphi)}{P(0)}.
\end{equation}
}
\item{Otherwise run the coupled particle filter, as in Algorithm \ref{alg:cpf}, with level $l$ and $N$ samples. Return the estimator, for each $t\in\mathbb{N}$
\begin{equation}\label{eq:st}
\gamma_{t,ST}(\varphi) = \frac{\Psi_t^l(\varphi)}{P(l)}.
\end{equation}
}
\end{enumerate}
\caption{The Single Term Unbiased Estimator of $\gamma_t(\varphi)$.}
\label{alg:st}
\end{algorithm}

\begin{algorithm}[t]
\begin{enumerate}
\item{Generate $L\in\{0,1,\dots\}$ using $P(\cdot)$. Run the particle filter, as in Algorithm \ref{alg:pf}, with $l=0$ and $N$ samples.}
\item{If $L=0$ return the estimator, for each $t\in\mathbb{N}$
\begin{equation}\label{eq:cs_0}
\gamma_{t,CS}(\varphi) = \Psi_t^0(\varphi).
\end{equation}
}
\item{Otherwise, for each $l\in\{1,\dots,L\}$ independently run the coupled particle filter, as in Algorithm \ref{alg:cpf}, with level $l$ and $N$ samples. Return the estimator, for each $t\in\mathbb{N}$
\begin{equation}\label{eq:cs}
\gamma_{t,CS}(\varphi) = \Psi_t^0(\varphi) + \sum_{l=1}^L \frac{\Psi_t^l(\varphi)}{Q(l)}
\end{equation}
}
\end{enumerate}
\caption{The Coupled Sum Unbiased Estimator of $\gamma_t(\varphi)$.}
\label{alg:cs}
\end{algorithm}

\section{Theoretical Results}\label{sec:theory}

\subsection{Multilevel and Unbiased Methods}

We now present several theoretical results which will allow us to understand the utility of our suggested approaches.
We begin with a result for the multilevel estimate for the solution to Zakai, in \eqref{eq:zakai_ml}.
Throughout our proofs $C$ is a finite constant whose value may change on appearance and does not depend upon $l$ nor $N$.
Propositions or Lemmata with a numbering A can be found in the appendix.

\begin{prop}\label{prop:mlpf_nc}
Assume (D\ref{hyp_diff:1}). Then for any $t\in\mathbb{N}$, there exists a $C<+\infty$ such that
for any $(L,N_{0:L},\varphi)\in\mathbb{N}\times \mathbb{N}^{L+1}\times\mathcal{B}_b(\mathbb{R}^{d_x})\cap\textrm{\emph{Lip}}_{\|\cdot\|_2}(\mathbb{R}^{d_x})$
$$
\mathbb{\overline{E}}[
(\gamma_{t,MLPF}^{L,N_{0:L}}(\varphi)-\gamma_t(\varphi))^2] \leq C(\|\varphi\|+\|\varphi\|_{\textrm{\emph{Lip}}})^2\Big(\sum_{l=0}^L \frac{\Delta_l^{1/2}}{N}+\Delta_L\Big).
$$
\end{prop}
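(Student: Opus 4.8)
The plan is to carry out a bias--variance decomposition around the biased but exactly targeted quantity $\gamma_t^L(\varphi)$. Writing
$$
\gamma_{t,MLPF}^{L,N_{0:L}}(\varphi)-\gamma_t(\varphi) = \big(\gamma_{t,MLPF}^{L,N_{0:L}}(\varphi)-\gamma_t^L(\varphi)\big) + \big(\gamma_t^L(\varphi)-\gamma_t(\varphi)\big),
$$
applying $(a+b)^2\leq 2a^2+2b^2$ and then $\mathbb{\overline{E}}[\cdot]$, it suffices to bound separately a variance term $\mathbb{\overline{E}}[(\gamma_{t,MLPF}^{L,N_{0:L}}(\varphi)-\gamma_t^L(\varphi))^2]$ and a squared--bias term $\mathbb{\overline{E}}[(\gamma_t^L(\varphi)-\gamma_t(\varphi))^2]$. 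Both $\gamma_t(\varphi)$ and $\gamma_t^L(\varphi)$ are $\mathcal{Y}_t$--measurable, so this split is clean.

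For the variance term I would first condition on the data. By Proposition \ref{prop:ub_nc} and its accompanying Remark the estimator is conditionally unbiased, $\mathbb{\overline{E}}[\gamma_{t,MLPF}^{L,N_{0:L}}(\varphi)\mid\mathcal{Y}_t]=\gamma_t^L(\varphi)$, so
$$
\mathbb{\overline{E}}\big[(\gamma_{t,MLPF}^{L,N_{0:L}}(\varphi)-\gamma_t^L(\varphi))^2\big] = \mathbb{\overline{E}}\big[\mathrm{Var}(\gamma_{t,MLPF}^{L,N_{0:L}}(\varphi)\mid\mathcal{Y}_t)\big].
$$
Because the PF run and the $L$ CPF runs in \eqref{eq:zakai_ml} are, conditional on $\mathcal{Y}_t$, mutually independent, the conditional variance factorises into a sum of $L+1$ conditional variances. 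It then remains to bound, for $l=0$, the particle-filter term $\mathbb{\overline{E}}[\mathrm{Var}(\gamma_{t,PF}^{0,N_0}(\varphi)\mid\mathcal{Y}_t)]\leq C(\|\varphi\|+\|\varphi\|_{\mathrm{Lip}})^2/N_0$, and, for each $l\geq1$, the coupled term $\mathbb{\overline{E}}[\mathrm{Var}([\gamma_t^l-\gamma_t^{l-1}]^{N_l}_{CPF}(\varphi)\mid\mathcal{Y}_t)]\leq C(\|\varphi\|+\|\varphi\|_{\mathrm{Lip}})^2\Delta_l^{1/2}/N_l$. These are the per-level bounds established in the appendix, adapting the analysis of \cite{high_freq_ml}. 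Summing over $l$ and using $\Delta_0^{1/2}=1$ gives $\sum_{l=0}^L \Delta_l^{1/2}/N_l$, which under the convention $N_l=N$ of the statement is the stated $\sum_{l=0}^L \Delta_l^{1/2}/N$.

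For the squared--bias term I would invoke the discretization estimate for Zakai's equation recorded in Lemma \ref{lem:disc_zakai} of the appendix, giving $\mathbb{\overline{E}}[(\gamma_t^L(\varphi)-\gamma_t(\varphi))^2]\leq C(\|\varphi\|+\|\varphi\|_{\mathrm{Lip}})^2\Delta_L$: this is the squared $L_2$ (strong) Euler rate $\Delta_L^{1/2}$ transferred through the Lipschitz $\varphi$, together with the boundedness of $h$ controlling the Girsanov weight $Z_t^L$ under $\overline{\mathbb{P}}$. Adding the two contributions and absorbing constants into $C$ yields the claim.

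The main obstacle is the per-level coupled variance bound carrying the $\Delta_l^{1/2}$ rate. One must show that the maximal coupling of the resampling indices (Algorithm \ref{alg:max_coup}) together with the synchronous coupling of the two Euler paths (Algorithm \ref{alg:coup_eul}) keeps the level-$l$ and level-$(l-1)$ particle systems sufficiently close that the conditional variance decays like $\Delta_l^{1/2}/N_l$ rather than $1/N_l$; the exponent $1/2$ (rather than $1$) is forced by the non-constant, full-rank $\sigma$ under (D\ref{hyp_diff:1}), and the Lipschitz property of $\varphi$ is what converts $L_2$ closeness of the coupled states into closeness of $\varphi$-evaluations, explaining the appearance of $\|\varphi\|_{\mathrm{Lip}}$. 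A secondary subtlety is that, since we treat the unnormalised $\gamma$ rather than the normalised filter $\eta$, the estimators carry products of empirical averages of the weights $\mathbf{G}_p^l$; controlling their moments after taking $\mathbb{\overline{E}}$ over the data uses the boundedness of $h$ and the fact that, under $\overline{\mathbb{P}}$, the increments $y_{(k+1)\Delta_l}-y_{k\Delta_l}$ are Gaussian, so all relevant moments are finite on the compact horizon $[0,t]$.
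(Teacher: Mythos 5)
Your proposal is correct and follows essentially the same route as the paper: the $C_2$-inequality split around $\gamma_t^L(\varphi)$, the squared-bias term handled by Lemma \ref{lem:disc_zakai}, and the variance term reduced via conditional unbiasedness (Proposition \ref{prop:ub_nc}) and conditional independence across levels to a sum of per-level contributions bounded by Proposition \ref{prop:cpf_bound} and Remark \ref{rem:pf_bound}. The only cosmetic difference is that you phrase the cross-term cancellation as a conditional-variance factorisation rather than writing out the telescoping sum and vanishing cross terms explicitly, which is the same argument.
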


\begin{proof}
By the $C_2-$inequality
$$
\mathbb{\overline{E}}[
(\gamma_{t,MLPF}^{L,N_{0:L}}(\varphi)-\gamma_t(\varphi))^2] \leq 
2\Big(
\mathbb{\overline{E}}[
(\gamma_{t,MLPF}^{L,N_{0:L}}(\varphi)-\gamma_{t}^{L}(\varphi))^2]  + 
\mathbb{\overline{E}}[
(\gamma_{t}^{L}(\varphi)-\gamma_t(\varphi))^2] \Big).
$$
For the second expectation on the R.H.S.~one can apply Lemma \ref{lem:disc_zakai} 2.~(in the appendix) to give
$$
\mathbb{\overline{E}}[
(\gamma_{t,MLPF}^{L,N_{0:L}}(\varphi)-\gamma_t(\varphi))^2] \leq C\Big(
\mathbb{\overline{E}}[
(\gamma_{t,MLPF}^{L,N_{0:L}}(\varphi)-\gamma_{t}^{L}(\varphi))^2]  + (\|\varphi\|+\|\varphi\|_{\textrm{Lip}})^2\Delta_L
\Big).
$$
For the first expectation on the R.H.S.~using $\gamma_{t}^{L}(\varphi) = \gamma_{t}^{0}(\varphi) + \sum_{l=1}^L[\gamma_{t}^{l}-\gamma_{t}^{l-1}](\varphi)$, along with
Proposition \ref{prop:ub_nc}
we have
$$
\mathbb{\overline{E}}[
(\gamma_{t,MLPF}^{L,N_{0:L}}(\varphi)-\gamma_t(\varphi))^2] \leq C\Big(
 \mathbb{\overline{E}}[[\gamma_{t,PF}^{0,N_0}-\gamma_t^0](\varphi)^2] +
\sum_{l=1}^L \mathbb{\overline{E}}[(
[\gamma_t^l-\gamma_t^{l-1}]^{N_l}_{CPF}(\varphi) -  [\gamma_t^l-\gamma_t^{l-1}](\varphi))^2] +
(\|\varphi\|+\|\varphi\|_{\textrm{Lip}})^2\Delta_L
\Big).
$$
Then applying Proposition \ref{prop:cpf_bound} along with Remark \ref{rem:pf_bound} one can conclude.
\end{proof}

We now consider the case of our suggested unbiased estimators.

\begin{prop}\label{prop:ub_sm}
Assume (D\ref{hyp_diff:1}). Then for any $t\in\mathbb{N}$, there exists a $C<+\infty$ such that for any $(N,\varphi)\in\mathbb{N}\times\mathcal{B}_b(\mathbb{R}^{d_x})\cap\textrm{\emph{Lip}}_{\|\cdot\|_2}(\mathbb{R}^{d_x})$
\begin{eqnarray*}
\mathbb{\overline{E}}[\gamma_{t,ST}(\varphi)^2]
& \leq & C(\|\varphi\|+\|\varphi\|_{\textrm{\emph{Lip}}})^2\Big(\sum_{l=0}^{\infty} \frac{1}{P(l)}\Big\{\frac{\Delta_l^{1/2}}{N}+\Delta_l\Big\}\Big)\\
\mathbb{\overline{E}}[\gamma_{t,CS}(\varphi)^2]
& \leq & C(\|\varphi\|+\|\varphi\|_{\textrm{\emph{Lip}}})^2\Big(\sum_{l=0}^{\infty} \frac{1}{Q(l)}\Big\{\frac{\Delta_l^{1/2}}{N}+\Delta_l\Big\}\Big).
\end{eqnarray*}
\end{prop}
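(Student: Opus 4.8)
The plan is to reduce both bounds to a single per-level estimate on the second moment of $\Psi_t^l(\varphi)$ and then exploit that the randomization level $L\sim P$ is independent of the family $(\Psi_t^l(\varphi))_{l\geq 0}$. First I would establish, for every $l\geq 0$, the bound
$$
\mathbb{\overline{E}}[\Psi_t^l(\varphi)^2] \leq C(\|\varphi\|+\|\varphi\|_{\textrm{Lip}})^2\Big(\frac{\Delta_l^{1/2}}{N}+\Delta_l\Big).
$$
For $l\geq 1$, writing $\Psi_t^l(\varphi) = [\gamma_t^l-\gamma_t^{l-1}]_{CPF}^N(\varphi)$, I would split via the $C_2$-inequality into the centered fluctuation $\Psi_t^l(\varphi) - [\gamma_t^l-\gamma_t^{l-1}](\varphi)$ and the (given $\mathcal{Y}_t$, deterministic) increment $[\gamma_t^l-\gamma_t^{l-1}](\varphi)$. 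Proposition \ref{prop:cpf_bound} controls the fluctuation by $C(\cdots)^2\Delta_l^{1/2}/N$; for the increment I would insert $\pm\gamma_t(\varphi)$ and apply Lemma \ref{lem:disc_zakai}.2 twice (using $\Delta_{l-1}=2\Delta_l$) to get $\mathbb{\overline{E}}[([\gamma_t^l-\gamma_t^{l-1}](\varphi))^2]\leq C(\cdots)^2\Delta_l$. The case $l=0$ is identical with Remark \ref{rem:pf_bound} in place of Proposition \ref{prop:cpf_bound} and $\Delta_0=1$, so the unified bound holds for all $l$.

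For the single-term estimator this is essentially immediate. Conditioning on $\mathcal{Y}_t$ and using independence of $L$ from $(\Psi_t^l(\varphi))_{l\geq 0}$,
$$
\mathbb{\overline{E}}[\gamma_{t,ST}(\varphi)^2\mid\mathcal{Y}_t] = \mathbb{\overline{E}}\Big[\tfrac{\Psi_t^L(\varphi)^2}{P(L)^2}\,\Big|\,\mathcal{Y}_t\Big] = \sum_{l=0}^{\infty}\frac{\mathbb{\overline{E}}[\Psi_t^l(\varphi)^2\mid\mathcal{Y}_t]}{P(l)}.
$$
Taking $\mathbb{\overline{E}}$ and inserting the per-level bound gives the claimed inequality, with Tonelli justifying the interchange of sum and expectation.

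The coupled-sum estimator is the substantive part. I would write $\gamma_{t,CS}(\varphi)=\sum_{l\geq 0}Q(l)^{-1}\Psi_t^l(\varphi)\,\mathbb{I}\{L\geq l\}$ (with $Q(0)=1$), expand the square, and use both the conditional independence of $\Psi_t^l(\varphi)$ and $\Psi_t^m(\varphi)$ given $\mathcal{Y}_t$ for $l\neq m$ (the CPF runs are independent across levels) and the independence of $L$, which yields $\mathbb{\overline{E}}[\mathbb{I}\{L\geq l\}\mathbb{I}\{L\geq m\}]=Q(\max(l,m))$. This splits the double sum into a diagonal part $\sum_l Q(l)^{-1}\mathbb{\overline{E}}[\Psi_t^l(\varphi)^2]$, bounded exactly as for ST, and an off-diagonal part. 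The crucial cancellation is that, since $Q$ is nonincreasing, $Q(\max(l,m))/(Q(l)Q(m))=1/Q(\min(l,m))$; writing $\mu_l:=\mathbb{\overline{E}}[\Psi_t^l(\varphi)\mid\mathcal{Y}_t]=\gamma_t^l(\varphi)-\gamma_t^{l-1}(\varphi)$ (with $\gamma_t^{-1}:=0$) and telescoping $\sum_{m>l}\mu_m=\gamma_t(\varphi)-\gamma_t^l(\varphi)$, the off-diagonal sum collapses to $2\sum_{l\geq 0}Q(l)^{-1}\,\mu_l\,(\gamma_t(\varphi)-\gamma_t^l(\varphi))$. Bounding its absolute value by Cauchy--Schwarz and applying Lemma \ref{lem:disc_zakai}.2 to both factors (each $\mathbb{L}_2$ norm is $O(\Delta_l^{1/2})$) produces $C(\cdots)^2\sum_l \Delta_l/Q(l)$, which is absorbed into the diagonal bound.

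The main obstacle is precisely this off-diagonal analysis for CS: one must recognize that $Q(\max(l,m))/(Q(l)Q(m))$ reduces to a single reciprocal and that the resulting inner sum telescopes against the discretization bias $\gamma_t-\gamma_t^l$, so that the cross terms contribute only at order $\Delta_l/Q(l)$ rather than the naive $\Delta_l^{1/2}/Q(l)$ a term-by-term triangle inequality would give. Some care is also needed to justify the interchanges of summation and expectation, by Tonelli for the nonnegative diagonal terms and by absolute convergence (guaranteed by finiteness of the final bounds for admissible $P$) for the off-diagonal terms.
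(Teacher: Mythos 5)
Your argument is correct and arrives at the same bounds, but by a more self-contained route than the paper for the coupled-sum case. The paper's proof simply invokes the exact second-moment identities of \cite[Theorems 3 and 5]{matti} for the ST and CS estimators and then bounds each summand exactly as you do, via the $C_2$-inequality, Proposition \ref{prop:cpf_bound}, Remark \ref{rem:pf_bound} and Lemma \ref{lem:disc_zakai}; your per-level estimate $\mathbb{\overline{E}}[\Psi_t^l(\varphi)^2]\leq C(\|\varphi\|+\|\varphi\|_{\textrm{Lip}})^2(\Delta_l^{1/2}/N+\Delta_l)$ and your treatment of ST therefore coincide with the paper's (the paper uses Lemma \ref{lem:disc_zakai} 1.\ directly where you insert $\pm\gamma_t(\varphi)$ and use part 2.\ twice, which is equivalent). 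For CS the paper quotes the identity
$$
\mathbb{\overline{E}}[\gamma_{t,CS}(\varphi)^2] = \sum_{l=0}^{\infty}\frac{1}{Q(l)}\Big(\mathbb{\overline{E}}[\Psi_t^l(\varphi)^2]-\mathbb{\overline{E}}[\Psi_t^l(\varphi)]^2+(\mathbb{\overline{E}}[\gamma_t^{l-1}(\varphi)]-\mathbb{\overline{E}}[\gamma_t(\varphi)])^2-(\mathbb{\overline{E}}[\gamma_t^{l}(\varphi)]-\mathbb{\overline{E}}[\gamma_t(\varphi)])^2\Big)
$$
and bounds it, whereas you expand the square of $\sum_{l\geq 0}Q(l)^{-1}\Psi_t^l(\varphi)\mathbb{I}\{L\geq l\}$ by hand, reduce $Q(\max(l,m))/(Q(l)Q(m))$ to $1/Q(\min(l,m))$, and telescope the off-diagonal terms against $\gamma_t(\varphi)-\gamma_t^l(\varphi)$. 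The two computations agree: using $2\mu_l(\gamma_t(\varphi)-\gamma_t^l(\varphi))=(\gamma_t(\varphi)-\gamma_t^{l-1}(\varphi))^2-(\gamma_t(\varphi)-\gamma_t^{l}(\varphi))^2-\mu_l^2$, your off-diagonal sum is precisely the conditional analogue of the bias-difference terms in the cited formula, and both routes bound that contribution by $C(\|\varphi\|+\|\varphi\|_{\textrm{Lip}})^2\sum_l\Delta_l/Q(l)$, which is absorbed into the stated bound. What your derivation buys is that it only uses independence of the level-$l$ runs \emph{conditionally} on $\mathcal{Y}_t$ — which is the form of independence the algorithm actually provides, since all levels share the same data path — where the paper's citation leaves that conditioning implicit; the price is a longer computation and the Tonelli/Fubini justifications, which you handle correctly.
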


\begin{proof}
For the first inequality, we have by (e.g.) \cite[Theorem 3]{matti} that
$$
\mathbb{\overline{E}}[\gamma_{t,ST}(\varphi)^2] = \sum_{l=0}^{\infty} \frac{1}{P(l)}\mathbb{\overline{E}}[\Psi_t^l(\varphi)^2].
$$
Noting \eqref{eq:psi0_def}, \eqref{eq:psil_def} and adding and subtracting $[\gamma_t^l-\gamma_t^{l-1}](\varphi)$ (resp.~$\gamma_t^0(\varphi)$) inside the square of the expectation when the index in the sum is greater than or equal to 1 (resp.~0) and using the $C_2-$inequality
\begin{eqnarray*}
\mathbb{\overline{E}}[\gamma_{t,ST}(\varphi)^2] & \leq &
\frac{C}{P(0)} \Big(
\mathbb{\overline{E}}[[\gamma_{t,PF}^{0,N}-\gamma_t^0](\varphi)^2] 
 + 
\mathbb{\overline{E}}[\gamma_t^0(\varphi)^2]
 \Big)
+ \\ & &C\Big(\sum_{l=1}^{\infty} \frac{1}{P(l)}
\mathbb{\overline{E}}[(
[\gamma_t^l-\gamma_t^{l-1}]^{N}_{CPF}(\varphi) -  [\gamma_t^l-\gamma_t^{l-1}](\varphi))^2] + \mathbb{\overline{E}}[[\gamma_t^l-\gamma_t^{l-1}](\varphi)^2]
\Big).
\end{eqnarray*}
Then applying Proposition \ref{prop:cpf_bound} (summands on the R.H.S.~when $l\geq 1$) along with Remark \ref{rem:pf_bound} (the summand on the R.H.S.~when $l=0$) as well as Lemma \ref{lem:disc_zakai} 1.~and that 
$\mathbb{\overline{E}}[\gamma_t^0(\varphi)^2]\leq C(\|\varphi\|+\|\varphi\|_{\textrm{Lip}})^2$ allows one to obtain the quoted bound.
For the second inequality, we have by (e.g.) \cite[Theorem 5]{matti} that 
$$
\mathbb{\overline{E}}[\gamma_{t,CS}(\varphi)^2] = 
\sum_{l=0}^{\infty} \frac{1}{Q(l)}\Big(\mathbb{\overline{E}}[\Psi_t^l(\varphi)^2]-\mathbb{\overline{E}}[\Psi_t^l(\varphi)]^2 + 
(\mathbb{\overline{E}}[\gamma_{t}^{l-1}(\varphi)] - \mathbb{\overline{E}}[\gamma_{t}(\varphi)])^2 -
(\mathbb{\overline{E}}[\gamma_{t}^{l}(\varphi)] - \mathbb{\overline{E}}[\gamma_{t}(\varphi)])^2 
\Big)
$$
where we take $\gamma_{t}^{-1}(\varphi)=0$. One can use a similar argument to the first inequality to conclude.
\end{proof}

\begin{rem}
Our estimators possess
the unbiased property:
$$
\mathbb{\overline{E}}[\gamma_{t,ST}(\varphi)] = \mathbb{\overline{E}}[\gamma_{t,CS}(\varphi)] = \mathbb{\overline{E}}[\gamma_{t}(\varphi)].
$$
It is also straightforward to deduce that for any continuous, bounded and real-valued function on a trajectory, $\phi$, that
$$
\mathbb{\overline{E}}\Big[\gamma_{t,ST}(\varphi)\phi\Big(\{Y_s\}_{s\in[0,t]}\Big)\Big] = \mathbb{\overline{E}}\Big[\gamma_{t,CS}(\varphi)\phi\Big(\{Y_s\}_{s\in[0,t]}\Big)\Big] = 
\mathbb{\overline{E}}\Big[\gamma_{t}(\varphi)\phi\Big(\{Y_s\}_{s\in[0,t]}\Big)\Big]
$$
that is, $\mathbb{\overline{E}}[\gamma_{t,ST}(\varphi)|\mathcal{Y}_t]$ and $\mathbb{\overline{E}}[\gamma_{t,CS}(\varphi)|\mathcal{Y}_t]$ are versions of $\gamma_t(\varphi)$
and hence almost surely unbiased estimators of $\gamma_t(\varphi)$.
If one chooses $P(l)$ so that the two bounds in Proposition \ref{prop:ub_sm} are finite then our suggested estimators have finite variance due to the unbiased property and  
$\mathbb{\overline{E}}[\gamma_t(\varphi)]^2$ being finite.
\end{rem}

\begin{rem}
It is remarked that all of the bounds depend upon $t$ and one would expect using a more detailed (and more complicated) approach, the bounds can be made $t$ independent.
See \cite{cpf_clt} for some work in that direction.
\end{rem}

\subsection{Discussion of Costs}\label{sec:cost_disc}

We first begin with the case of the MLPF estimate. This follows the analysis in \cite{high_freq_ml} as follows. Let $\epsilon>0$ be arbitrary, to obtain a bound on the MSE, in Proposition \ref{prop:mlpf_nc}, of $\mathcal{O}(\epsilon^2)$ one can choose $L$ so that $\Delta_L=\mathcal{O}(\epsilon^2)$. Then setting $N_l=\mathcal{O}(\epsilon^{-2}\Delta_L^{-1/4}\Delta_{l}^{3/4})$, the upper-bound
in Proposition \ref{prop:mlpf_nc} is $\mathcal{O}(\epsilon^2)$. The associated cost to achieve this MSE is $\mathcal{O}(\sum_{l=0}^L \Delta_l^{-1}N_l)=\mathcal{O}(\epsilon^{-3})$. 
Using Remark \ref{rem:pf_bound} and Lemma \ref{lem:disc_zakai} 2., one can show that using an estimator such as $\gamma_{t,PF}^{L,N}(\varphi)$ (with $L$ chosen so that $\Delta_L=\mathcal{O}(\epsilon^2)$) one would need a cost of $\mathcal{O}(\epsilon^{-4})$ to obtain a MSE
of $\mathcal{O}(\epsilon^2)$. We remark that choosing $\Delta_L=\mathcal{O}(\epsilon^2)$ requires one to have access to data which is sufficiently frequently observed and so this
methodology is primarily useful for the observation of ultra-high frequency data.

In the case of the randomized estimators, we focus on the case of a single term estimator $\gamma_{t,ST}^M(\varphi)$. We note that in order to choose $P(l)$ so that the variance of the estimator is finite and that the expected cost, $tN\sum_{l=0}^{\infty} \Delta_l^{-1} P(l)$, is finite, is typically not possible. However, one can consider the context of \cite[Proposition 5]{rhee}. There, the authors show that if $P(l)\propto \Delta_l^{1/2+\alpha}(l+1)\log_2(l+2)^2$, for some $0<\alpha<1/2$, then with high probability, the cost to be within $\epsilon$ of
$\gamma_t(\varphi)$ is $\mathcal{O}(\epsilon^{-3}|\log_2(\epsilon)|^{3+\beta})$ for some $\beta>0$. This means that the unbiased estimator will cost a little more, in general, than using the MLPF approach. We remark however, that the unbiased methodology is embarrassingly parallel, whereas the MLPF has limitations to the extent to which it can be parallelized. Note that choosing $N$ to be dependent on $l$ can reduce the variance, whilst increasing the cost. We have found the strategy of fixing $N$ (in terms of $l$) and using $P(l)$ as detailed works well in practice and maintain this convention in the next section. It should be remarked that in principle, as for the MLPF approach, one must have access to very high
(in fact arbitrarily high) frequency data. In practice, however, one cannot run the algorithm beyond say $l=50$ and one has to truncate the estimate at the cost of some bias.

\section{Numerical Simulations}\label{sec:numerics}

\subsection{Models}
The numerical performance of the unbiased single-term (ST) and the coupled-sum (CS) estimators will be illustrated here with four different examples of one-dimensional diffusion processes; we set $d_y=d_x=1$.
 We also compare the performance of these estimators with a MLPF. Under the new measure $\overline{\mathbb{P}}$ the data $\{Y_t\}_{t\geq 0}$ is a standard Brownian motion that is independent of the process $\{X_t\}_{t\geq 0}$; the data are simulated from the observation process (i.e.~standard Brownian motion). 
For all the examples below we take $h(x)=x$, $t=50$ and $\varphi(x) = x$. The diffusion processes considered are
\begin{enumerate}
\item \textbf{Ornstein-Uhlenbeck (OU) Process:}\\
\\
In this example $b(x)=-x$, $\sigma(x)=0.5$, $x_*=0$.
\item \textbf{Langevin Stochastic Differential Equation:}\\
\\
For this process we take $b(x)=\frac{1}{2}\frac{d}{dx} \left(\log f(x)\right)$, where $f(x)$ denotes a probability density function chosen as the student's $t$-distribution with degrees of freedom $\nu=10$:
\[f(x) = \frac{\Gamma(\frac{\nu+1}{2})} {\sqrt{\nu\pi}\,\Gamma(\frac{\nu}{2})} \left(1+\frac{x^2}{\nu} \right)^{-\frac{\nu+1}{2}}.\]
The other constants are $\sigma(x)=0.5$ and $x_*=0$.
\item \textbf{Geometric Brownian Motion (GBM):}\\
\\
Next we consider the GBM process with $b(x)=b_0 x$, $\sigma(x)=\sigma_0 x$ and $x_*=1$, where $b_0=0.05$ and $\sigma_0=0.2$.
\item  \textbf{An SDE with a Non-Linear Diffusion Term:}\\
\\
Finally, for this example we take $b(x)=-x$, $\sigma(x)=1/\sqrt{1+x^2}$ and $x_*=0$. 
\end{enumerate}

\subsection{Simulation Settings}
For each example, the ground truth is computed through a particle filter with a discretization level $2^{-8}$ and $10^4$ particles. To compare with the unbiased ST and CS estimators, we run the MLPF algorithm for $L\in\{1,\cdots,7\}$.  For the MLPF algorithm, if $\sigma$ is non-constant, the choice of $N_{0:L}$ is as in Section \ref{sec:cost_disc}.
Otherwise we choose $N_{0:L}$ as specified in \cite[Section 4]{high_freq_ml}.
For the ST and CS estimators, for the case where $\sigma$ is non-constant, we use $P(l)$ as specified in Section \ref{sec:cost_disc}.
If $\sigma$ is constant, \cite[Proposition 4]{rhee} suggests that one can choose $P(l)\propto 2^{-l}\, (l+1)\,(\log_2(l+2))^2$. In practice, we constrain $l$ to the same support
as using the MLPF algorithm where one attempts to achieve a comparable cost. The number of samples $N$ used in the ST and CS estimators was 100 ($\sigma$ constant) or 200
($\sigma$ non-constant). The number of samples $M$ was chosen to have the same $\epsilon$ as the corresponding MLPF ($\epsilon$ is determined by $L$ for the MLPF).
For all PFs/CPFs used, we adopted dynamic resampling (see e.g.~\cite{ddj2012}) according to the effective sample size (ESS) with resampling threshold a quarter of the number of simulated samples (the minimum of the ESS between the levels is used in the case of a CPF).
All results are averaged over 100 runs.

\subsection{Results}

Here we present the plots of $\log$ cost versus $\log$ MSE, where the cost of the MLPF estimator is given by $t\sum_{l=0}^{L} N_l\Delta_l^{-1}$, for the ST estimator the expected cost is given by $tMN\sum_{l=0}^{L} P(l) \Delta_l^{-1}$, and for the CS estimator the expected cost is given by $tMN\sum_{l=0}^{L} P(l) \sum_{q=0}^{l} \Delta_q^{-1}$. The results are presented in Figure \ref{fig:cost_mse}. We remark that in our implementation, for the ST and CS estimators, we did not parallelize over multiple samples. Our results agree with
predicted theory in Section \ref{sec:theory} in that the cost of the unbiased methods is of a similar rate to the MLPF, except with a mild increase.
The CS estimate gives a better cost than the ST estimate and this is consistent with what has been presented about these approaches in the literature.
%

\begin{figure} [H]
\centering
    \subfigure[]{\includegraphics[height=0.34\textwidth]{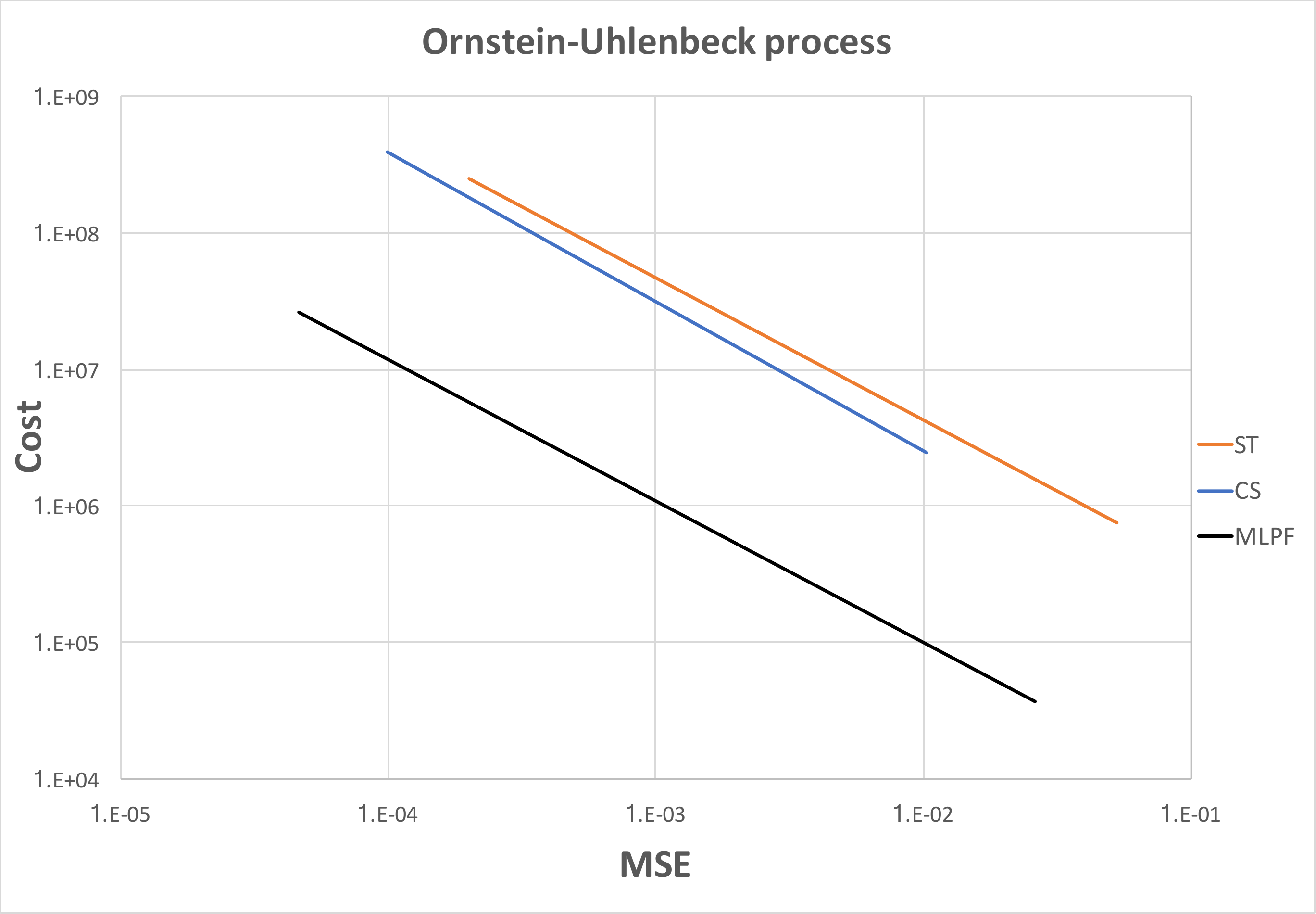} }
    \subfigure[]{\includegraphics[height=0.34\textwidth]{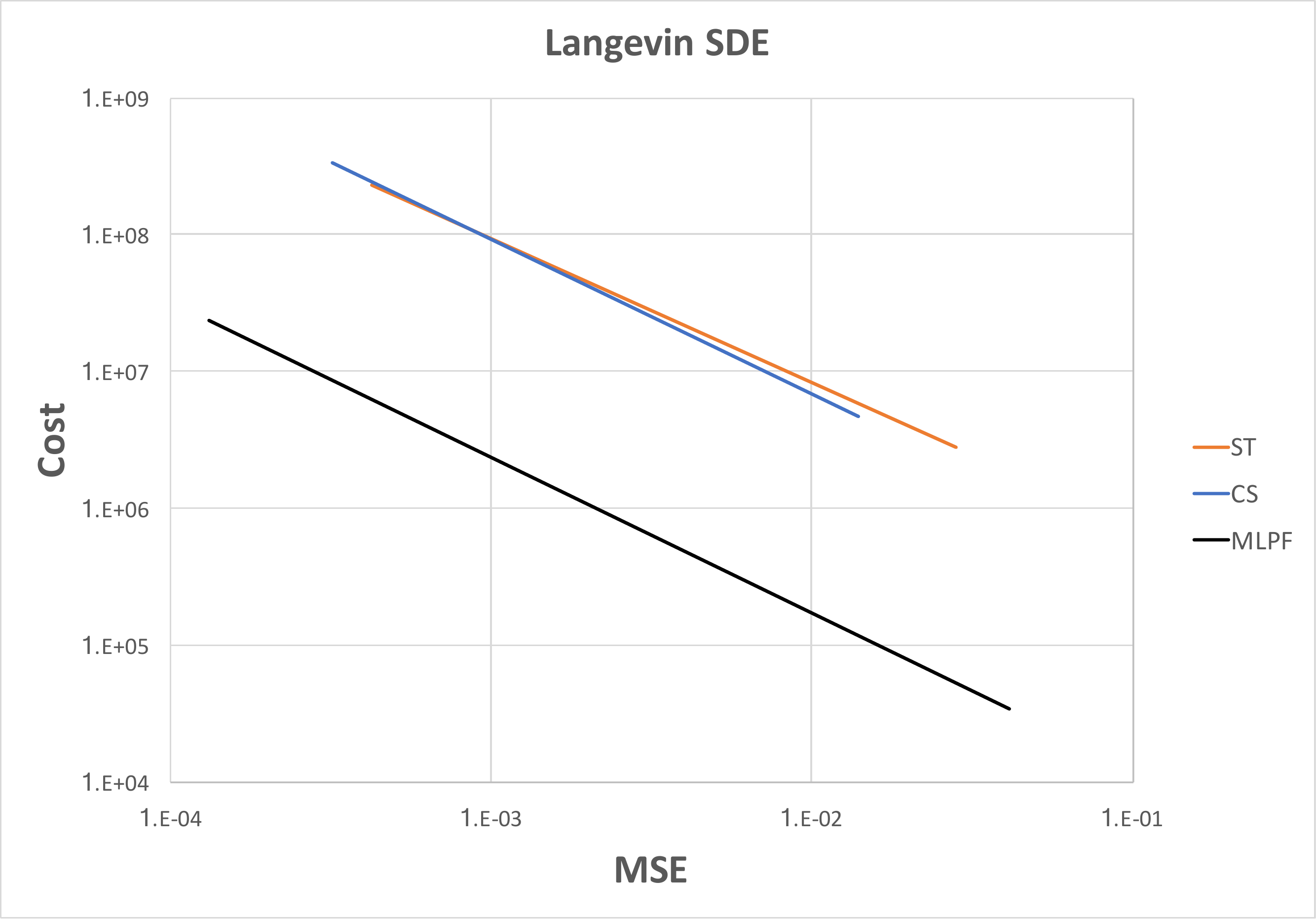}}
     \subfigure[]{\includegraphics[height=0.337\textwidth]{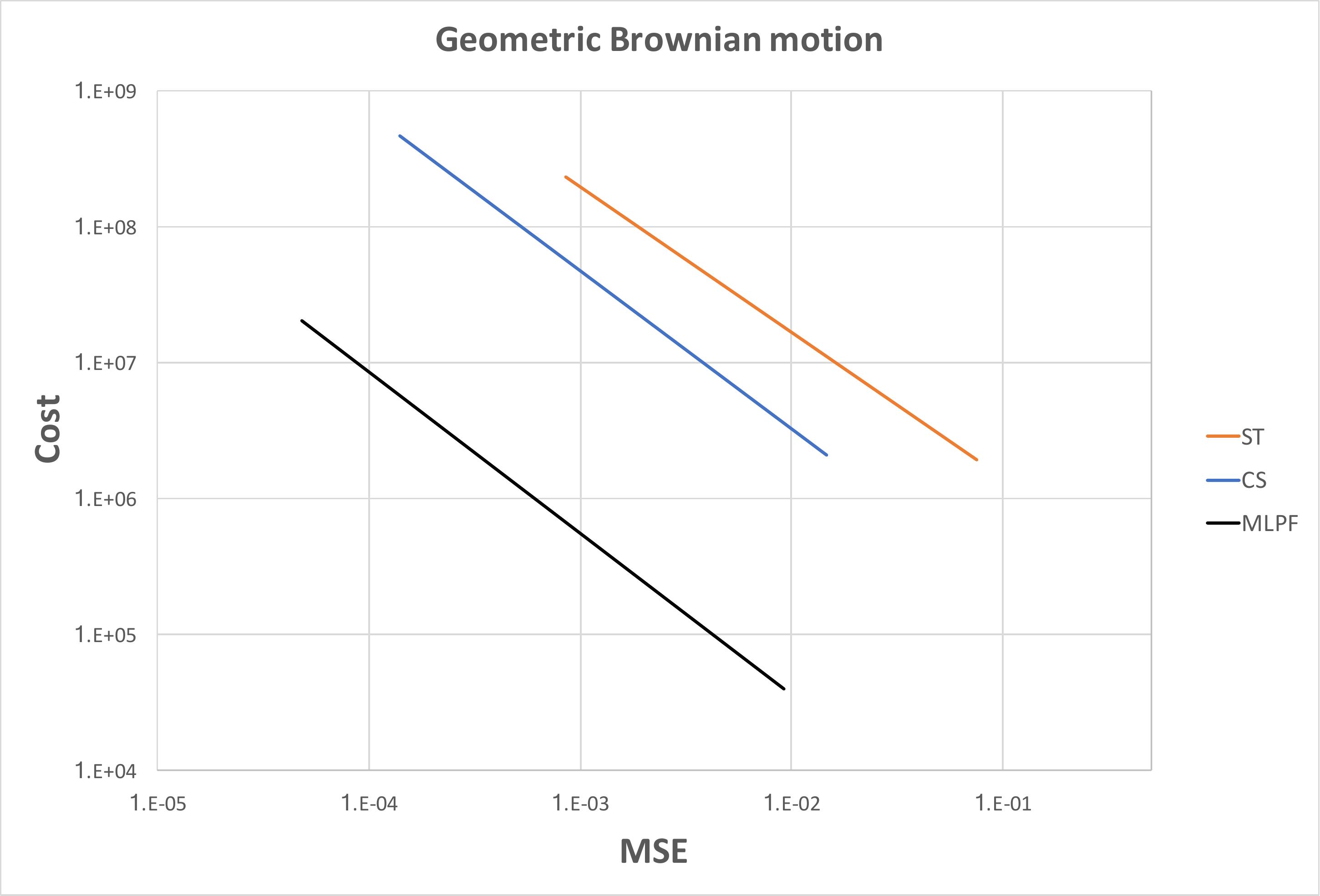}}
   \subfigure[]{\includegraphics[height=0.337\textwidth]{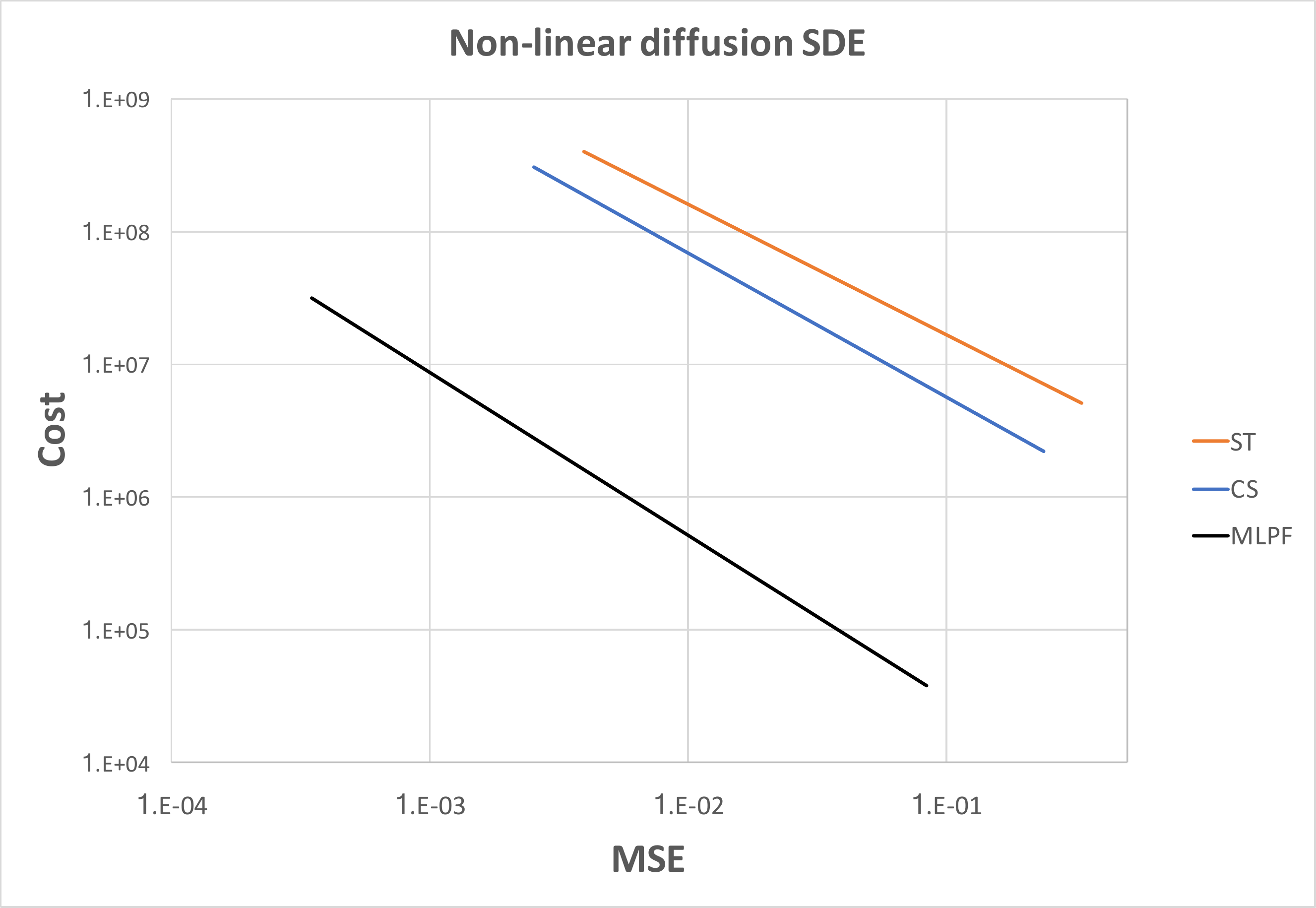}} 
   \caption{Cost versus MSE.} 
    \label{fig:cost_mse}
\end{figure}

\subsubsection*{Acknowledgements}

Both authors were supported by KAUST baseline funding.

\appendix

\section{Proofs}\label{app:proofs}

In order to understand the proofs/results in the main text, this appendix can be read linearly.

Some operators are now defined. Let $(l,p,n)\in\mathbb{N}_0^3$, $n>p$, $(u_p,\varphi)\in E_l\times\mathcal{B}_b(E_l)$
$$
\mathbf{Q}_{p,n}^l(\varphi)(u_p) := \int \varphi(u_n)\Big(\prod_{q=p}^{n-1} \mathbf{G}_q^l(u_q)\Big) \prod_{q=p+1}^{n}M^l(u_{q-1},du_q).
$$
where we use the convention $\mathbf{Q}_{p,p}^l(\varphi)(u_p)=\varphi(u_p)$ and we set $u_p=(x_{p},x_{p+\Delta_l},\dots,x_{p+1})$. For $(p,l)\in\mathbb{N}\times\mathbb{N}$, define the operator $\Phi_p^l:\mathcal{P}(E_l)\rightarrow\mathcal{P}(E_l)$ with $(\mu,\varphi)\in\mathcal{P}(E_l)\times\mathcal{B}_b(E_l)$ as:
\begin{equation}\label{eq:phi_rec}
\Phi_p^l(\mu)(\varphi) :=  \frac{\mu(\mathbf{G}_{p-1}^l\mathbf{M}^l(\varphi))}{\mu(\mathbf{G}_{p-1}^l)}
\end{equation}
where, to clarify, $\mu(\mathbf{G}_{p-1}^l\mathbf{M}^l(\varphi))= \int_{E_l}\mu(d(x_{p-1},x_{p-1+\Delta_l},\dots,x_{p}))\mathbf{G}_{p-1}^l(x_{p-1},x_{p-1+\Delta_l},\dots,x_{p-\Delta_l})M^l(\varphi)(x_p)$.

Now, we write the empirical measure of samples that are generated at level $l$ (resp.~$l-1$) by Algorithm \ref{alg:cpf} at the end of step 1.~or step 2.~for $(t,l,N)\in\mathbb{N}_0\times\mathbb{N}^2$
$$
\pi_t^{l,N}(du) := \frac{1}{N}\sum_{i=1}^N \delta_{\{x_{t:t+1}^{l,i}\}}(du)\quad\textrm{resp.}\quad
\check{\pi}_t^{l-1,N}(du) := \frac{1}{N}\sum_{i=1}^N \delta_{\{\check{x}_{t:t+1}^{l-1,i}\}}(du).
$$
If one just considers a particle filter, as in Algorithm \ref{alg:pf} we use the notation $\pi_t^{l,N}$, $(t,l,N)\in\mathbb{N}_0^2\times\mathbb{N}$ to denote the empirical measure of the samples produced either at the end of step 1.~or step 2.. For $\varphi\in\mathcal{B}_b(\mathbb{R}^{d_x})$, we define, for any $l\in\mathbb{N}_0$, $\pmb{\varphi}^l:E_l\rightarrow\mathbb{R}$
$$
\pmb{\varphi}^l(x_0,x_{\Delta_l},\dots,x_{1}) := \varphi(x_1).
$$
Given the above notation, we have the following martingale (we will define the filtration below) decomposition from \cite[Theorem 7.4.2.]{delm:04} for $(t,l,N,\varphi)\in\mathbb{N}\times\mathbb{N}_0\times\mathbb{N}\times\mathcal{B}_b(\mathbb{R}^{d_x})$:
\begin{equation}\label{eq:mart_nc}
[\gamma_{t,PF}^{l,N}-\gamma_t^l](\varphi) = \sum_{p=0}^{t-1} \gamma_{p,PF}^{l,N}(1)[\pi_p^{l,N}-\Phi_p^l(\pi_{p-1}^{l,N})](\mathbf{Q}_{p,t-1}^l(\mathbf{G}_{t-1}^l\pmb{\varphi}^l))
\end{equation}
where we use the convention $\Phi_0^l(\pi_{-1}^{l,N})(\cdot)=M^l(x_*,\cdot)$. Let $\mathcal{G}_t^l$ be the $\sigma-$algebra generated by the particle filter at level $l\in\mathbb{N}_0$ up-to time $t\in\mathbb{N}_0$ (after step 1.~or step 2.~of Algorithm \ref{alg:pf}, time 0 corresponds to the end of step 1.),
and set $\mathcal{H}_s^l = \mathcal{G}_s^l\otimes\mathcal{Y}_t$ for $s\in\mathbb{N}_0$, with $\mathcal{H}_{-1}^l=\mathcal{Y}_t$ and $t\in\mathbb{N}$ fixed.

In addition, one has for $(t,l,N,\varphi)\in\mathbb{N}^3\times\mathcal{B}_b(\mathbb{R}^{d_x})$
\begin{eqnarray}
 [\gamma_t^l-\gamma_t^{l-1}]^N_{CPF}(\varphi) -  [\gamma_t^l-\gamma_t^{l-1}](\varphi) & = &
 \sum_{p=0}^{t-1} \Big\{\gamma_{p,CPF}^{l,N}(1)[\pi_p^{l,N}-\Phi_p^l(\pi_{p-1}^{l,N})](\mathbf{Q}_{p,t-1}^l(\mathbf{G}_{t-1}^l\pmb{\varphi}^l)) - \nonumber\\& &
 \check{\gamma}_{p,CPF}^{l-1,N}(1)[\check{\pi}_p^{l-1,N}-\Phi_p^{l-1}(\check{\pi}_{p-1}^{l-1,N})](\mathbf{Q}_{p,t-1}^{l-1}(\mathbf{G}_{t-1}^{l-1}\pmb{\varphi}^{l-1}))\Big\}
 \label{eq:mart_nc_cpf}
 \end{eqnarray}
 where we use the convention $\Phi_0^{l-1}(\check{\pi}_{-1}^{l-1,N}(\cdot)=M^{l-1}(x_*,\cdot)$ and we use the notation
 \begin{eqnarray*}
 \gamma_{p,CPF}^{l,N}(1) & = & \prod_{q=0}^{p-1}\pi_q^{l,N}(\mathbf{G}_q^l) \\
  \check{\gamma}_{p,CPF}^{l-1,N}(1) & = & \prod_{q=0}^{p-1}\check{\pi}_q^{l-1,N}(\mathbf{G}_q^{l-1}).
 \end{eqnarray*}
 Let $\mathcal{\check{G}}_t^l$ be the $\sigma-$algebra generated by the coupled particle filter at level $l\in\mathbb{N}$ up-to time $t\in\mathbb{N}_0$ (after step 1.~or step 2.~of Algorithm \ref{alg:cpf}, time 0 corresponds to the end of step 1.),
and set $\mathcal{\check{H}}_s^l = \mathcal{\check{G}}_s^l\otimes\mathcal{Y}_t$ for $s\in\mathbb{N}_0$, with $\mathcal{\check{H}}_{-1}^l=\mathcal{Y}_t$ and $t\in\mathbb{N}$ fixed.

\begin{proof}[Proof of Proposition \ref{prop:ub_nc}]
We have, almost surely, that for any $(t,l,N,\varphi)\in\mathbb{N}\times\mathbb{N}_0\times\mathbb{N}\times\mathcal{B}_b(\mathbb{R}^{d_x})$ and $s\in\{-1,\dots,t-2\}$
$$
\mathbb{\overline{E}}[[\gamma_{t,PF}^{l,N}-\gamma_t^l](\varphi)|\mathcal{H}_{s}^l] = \sum_{p=0}^{s} \gamma_{p,PF}^{l,N}(1)[\pi_p^{l,N}-\Phi_p^l(\pi_{p-1}^{l,N})](\mathbf{Q}_{p,t-1}^l(\mathbf{G}_{t-1}^l\pmb{\varphi}^l))
$$
and hence that 
$$
\mathbb{\overline{E}}[[\gamma_{t,PF}^{l,N}-\gamma_t^l](\varphi)|\mathcal{H}_{-1}^l] = \mathbb{\overline{E}}[[\gamma_{t,PF}^{l,N}-\gamma_t^l](\varphi)|\mathcal{Y}_{t}] = 0.
$$
In an almost identical argument, for any $(t,l,N,\varphi)\in\mathbb{N}\times\mathbb{N}\times\mathbb{N}\times\mathcal{B}_b(\mathbb{R}^{d_x})$, almost surely
$$
\mathbb{\overline{E}}[ [\gamma_t^l-\gamma_t^{l-1}]^N_{CPF}(\varphi) -  [\gamma_t^l-\gamma_t^{l-1}](\varphi)|\mathcal{\check{H}}_{-1}^l] = 
\mathbb{\overline{E}}[ [\gamma_t^l-\gamma_t^{l-1}]^N_{CPF}(\varphi) -  [\gamma_t^l-\gamma_t^{l-1}](\varphi)|\mathcal{Y}_t] = 0
$$
which allows one to conclude the result.
\end{proof}

\begin{prop}\label{prop:cpf_bound}
Assume (D\ref{hyp_diff:1}). Then for any $(t,q)\in\mathbb{N}\times\mathbb{N}$, there exists a $C<+\infty$ such that
for any $(l,N,\varphi)\in\mathbb{N}^2\times \mathcal{B}_b(\mathbb{R}^{d_x})\cap\textrm{\emph{Lip}}_{\|\cdot\|_2}(\mathbb{R}^{d_x})$
$$
\mathbb{\overline{E}}[|
[\gamma_t^l-\gamma_t^{l-1}]^N_{CPF}(\varphi) -  [\gamma_t^l-\gamma_t^{l-1}](\varphi)|^q]^{1/q} \leq C(\|\varphi\|+\|\varphi\|_{\textrm{\emph{Lip}}})\frac{\Delta_l^{1/4}}{\sqrt{N}}.
$$
\end{prop}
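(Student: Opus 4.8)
Throughout write $\|Z\|_r:=\overline{\mathbb{E}}[|Z|^r]^{1/r}$ for the $\mathbb{L}_r(\overline{\mathbb{P}})$-norm of a random variable (not to be confused with the vector norms). The plan is to work directly from the decomposition \eqref{eq:mart_nc_cpf}, which writes the quantity of interest as a finite sum over $p\in\{0,\dots,t-1\}$ of the increments
$$
D_p := \gamma_{p,CPF}^{l,N}(1)[\pi_p^{l,N}-\Phi_p^l(\pi_{p-1}^{l,N})](\mathbf{Q}_{p,t-1}^l(\mathbf{G}_{t-1}^l\pmb{\varphi}^l)) - \check{\gamma}_{p,CPF}^{l-1,N}(1)[\check{\pi}_p^{l-1,N}-\Phi_p^{l-1}(\check{\pi}_{p-1}^{l-1,N})](\mathbf{Q}_{p,t-1}^{l-1}(\mathbf{G}_{t-1}^{l-1}\pmb{\varphi}^{l-1})).
$$
Since $t$ is fixed, Minkowski's inequality reduces the claim to showing $\|D_p\|_q\le C(\|\varphi\|+\|\varphi\|_{\textrm{Lip}})\Delta_l^{1/4}/\sqrt{N}$ for each $p$, with $C=C(t,q)$.

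To bound one increment I would abbreviate the normalizing constants $\Gamma_p^l:=\gamma_{p,CPF}^{l,N}(1)$, $\check\Gamma_p^{l-1}:=\check{\gamma}_{p,CPF}^{l-1,N}(1)$, the path-space test functions $H_p^l:=\mathbf{Q}_{p,t-1}^l(\mathbf{G}_{t-1}^l\pmb{\varphi}^l)$, $H_p^{l-1}:=\mathbf{Q}_{p,t-1}^{l-1}(\mathbf{G}_{t-1}^{l-1}\pmb{\varphi}^{l-1})$, and the centred fluctuations $\zeta_p^l:=[\pi_p^{l,N}-\Phi_p^l(\pi_{p-1}^{l,N})](H_p^l)$, $\check\zeta_p^{l-1}:=[\check{\pi}_p^{l-1,N}-\Phi_p^{l-1}(\check{\pi}_{p-1}^{l-1,N})](H_p^{l-1})$, so that $D_p=\Gamma_p^l\zeta_p^l-\check\Gamma_p^{l-1}\check\zeta_p^{l-1}$. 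Adding and subtracting $\Gamma_p^l\check\zeta_p^{l-1}$,
$$
D_p = \Gamma_p^l(\zeta_p^l-\check\zeta_p^{l-1}) + (\Gamma_p^l-\check\Gamma_p^{l-1})\check\zeta_p^{l-1},
$$
and Minkowski followed by Hölder's inequality (exponent $2$) bounds $\|D_p\|_q$ by $\|\Gamma_p^l\|_{2q}\|\zeta_p^l-\check\zeta_p^{l-1}\|_{2q}+\|\Gamma_p^l-\check\Gamma_p^{l-1}\|_{2q}\|\check\zeta_p^{l-1}\|_{2q}$. The task thus reduces to four estimates: (i) $\|\Gamma_p^l\|_{2q}\le C$; (ii) $\|\zeta_p^l-\check\zeta_p^{l-1}\|_{2q}\le C(\|\varphi\|+\|\varphi\|_{\textrm{Lip}})\Delta_l^{1/4}/\sqrt{N}$; (iii) $\|\Gamma_p^l-\check\Gamma_p^{l-1}\|_{2q}\le C\Delta_l^{1/4}$; (iv) $\|\check\zeta_p^{l-1}\|_{2q}\le C\|\varphi\|/\sqrt{N}$. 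Multiplying (i)$\cdot$(ii) and (iii)$\cdot$(iv) both produce the target rate $\Delta_l^{1/4}/\sqrt{N}$.

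Estimates (i) and (iv) are routine. For (i), under (D\ref{hyp_diff:1}) the function $h$ is bounded and, under $\overline{\mathbb{P}}$, the increments of $Y$ are Gaussian and independent of the signal, so the products $\Gamma_p^l$ of empirical weight averages have finite moments of every order, bounded uniformly in $(l,N)$ (standard particle-filter moment bounds). For (iv), conditionally on the past $\check{\mathcal{H}}_{p-1}^l$ the $N$ contributions to $\check\zeta_p^{l-1}$ are independent and centred, so a Burkholder--Davis--Gundy (or Marcinkiewicz--Zygmund) inequality yields the $N^{-1/2}$, while $\|H_p^{l-1}\|$ is bounded by $\|\varphi\|$ times a finite-moment $\mathcal{Y}_t$-measurable factor. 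Estimate (iii) follows by telescoping the products $\Gamma_p^l=\prod_{q<p}\pi_q^{l,N}(\mathbf{G}_q^l)$ and $\check\Gamma_p^{l-1}=\prod_{q<p}\check{\pi}_q^{l-1,N}(\mathbf{G}_q^{l-1})$ and bounding each factor difference $\pi_q^{l,N}(\mathbf{G}_q^l)-\check{\pi}_q^{l-1,N}(\mathbf{G}_q^{l-1})$; using the Lipschitz regularity of $h$ from (D\ref{hyp_diff:1}) together with the order-$\Delta_l^{1/4}$ (in $\mathbb{L}_{2q}$) agreement of the coupled Euler paths of Algorithm \ref{alg:coup_eul} gives each such difference the rate $\Delta_l^{1/4}$.

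The main obstacle is (ii), the \emph{coupled} fluctuation bound, which is where both the synchronous Euler coupling (Algorithm \ref{alg:coup_eul}) and the maximal coupling of resampling indices (Algorithm \ref{alg:max_coup}) must be exploited. Writing $\zeta_p^l-\check\zeta_p^{l-1}$ as a single conditionally centred average over the $N$ coupled particle pairs, a conditional martingale moment inequality bounds its $\mathbb{L}_{2q}$-norm by $N^{-1/2}$ times the $\mathbb{L}_{2q}$ size of a per-pair increment. That per-pair increment is small because the two test functions agree up to the strong error of the coupling: the crux is the estimate
$$
|H_p^l(u)-H_p^{l-1}(\check u)|\le C(\|\varphi\|+\|\varphi\|_{\textrm{Lip}})\big(\Delta_l^{1/4}+\|u-\check u\|_2\big)
$$
for coupled paths $(u,\check u)$, obtained by propagating the Lipschitz constant of $\varphi$ through the operators $\mathbf{Q}_{p,t-1}^l$ using the smoothing afforded by the uniform ellipticity and Lipschitz coefficients of (D\ref{hyp_diff:1}), combined with the per-pair coupling strength $\overline{\mathbb{E}}[\|x_p^{l,i}-\check x_p^{l-1,i}\|_2^{2q}]^{1/(2q)}\le C\Delta_l^{1/4}$ inherited from the coupled particle filter (the $\mathbb{L}_{2q}$ analogue of the $\beta=1/2$ regime of \eqref{eq:coup_cond}). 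Controlling this per-pair strength across the resampling step, where mismatched indices are rare under the maximal coupling but can separate a pair, requires the inductive coupled-particle-filter argument of \cite{high_freq_ml}, on which I would rely for the detailed constants. Assembling (i)--(iv) and summing the $t$ increments then gives the stated bound.
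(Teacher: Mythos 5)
Your proposal is correct in outline and follows the same architecture as the paper's proof: both start from the decomposition \eqref{eq:mart_nc_cpf}, split each time-$p$ increment into ``(difference of empirical normalizing constants) $\times$ (one fluctuation)'' plus ``(one normalizing constant) $\times$ (difference of coupled fluctuations)'', and control the pieces with H\"older, the conditional Marcinkiewicz--Zygmund inequality, and the coupling lemmas of \cite{high_freq_ml} (your crux estimate for $|H_p^l(u)-H_p^{l-1}(\check u)|$ is exactly the role played by \cite[Lemma A.4/A.10]{high_freq_ml} in the paper, though it should be stated as an $\mathbb{L}_{2q}$ bound with a $\mathcal{Y}_t$-measurable factor of finite moments rather than a pathwise bound with a deterministic constant). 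The one genuine structural difference is your step (iii). The paper observes that $\Gamma_p^l-\check\Gamma_p^{l-1}=[\gamma_p^l-\gamma_p^{l-1}]_{CPF}^{N}(1)$, adds and subtracts the exact value $[\gamma_p^l-\gamma_p^{l-1}](1)$, and runs a strong induction on $t$: the centred part is the proposition itself at the earlier time $p$ with $\varphi=1$ (rate $\Delta_l^{1/4}/\sqrt{N}$), and the exact part is $O(\Delta_l^{1/2})$ by the bias lemma (Lemma \ref{lem:disc_zakai} 1.). You instead telescope the products $\prod_{q<p}\pi_q^{l,N}(\mathbf{G}_q^l)$ and $\prod_{q<p}\check{\pi}_q^{l-1,N}(\mathbf{G}_q^{l-1})$ and bound each factor difference directly at rate $\Delta_l^{1/4}$. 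This works and avoids the explicit induction on $t$, but note that the per-factor bound is not a consequence of the Euler coupling alone: $\pi_q^{l,N}(\mathbf{G}_q^l)-\check{\pi}_q^{l-1,N}(\mathbf{G}_q^{l-1})$ requires both the grid-mismatch error of the weights (order $\Delta_l^{1/2}$) and the $\mathbb{L}_{2q}$ closeness of particle pairs that have survived $q$ maximal-coupling resampling steps, which is itself established by an induction in \cite{high_freq_ml}; so the induction is deferred rather than removed. Both routes deliver the stated $\Delta_l^{1/4}/\sqrt{N}$, and neither is more elementary than the other: the paper's version reuses the proposition being proved (a cleaner bookkeeping device), while yours isolates the normalizing-constant comparison as a standalone estimate.
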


\begin{proof}
Throughout $C$ is a finite constant whose value may change on appearance and does not depend upon $l$ nor $N$.
Our proof is by strong induction on $t$. Consider the case $t=1$, then using \eqref{eq:mart_nc_cpf}
\begin{eqnarray*}
\mathbb{\overline{E}}[|
[\gamma_1^l-\gamma_1^{l-1}]^N_{CPF}(\varphi) -  [\gamma_1^l-\gamma_1^{l-1}](\varphi)|^q]^{1/q} & = & 
\mathbb{\overline{E}}[|
\pi_0^{l,N}(\mathbf{G}_{0}^l\pmb{\varphi}^l)-M^l(\mathbf{G}_{0}^l\pmb{\varphi}^l)(x_*) - 
\check{\pi}_0^{l-1,N}(\mathbf{G}_{0}^{l-1}\pmb{\varphi}^{l-1})+ \\ & &M^{l-1}(\mathbf{G}_{0}^{l-1}\pmb{\varphi}^{l-1})(x_*)
|^q]^{1/q}.
\end{eqnarray*}
Applying the Marcinkiewicz-Zygmund and Jensen inequalites, one can deduce that
$$
\mathbb{\overline{E}}[|
[\gamma_1^l-\gamma_1^{l-1}]^N_{CPF}(\varphi) -  [\gamma_1^l-\gamma_1^{l-1}](\varphi)|^q]^{1/q} \leq
C\frac{1}{\sqrt{N}}\mathbb{\overline{E}}[|
\mathbf{G}_{0}^l(U_0^{l,i})\varphi(X_1^{l,i}) - 
\mathbf{G}_{0}^l(\check{U}_0^{l-1,i})\varphi(\check{X}_1^{l-1,i})
|^q]^{1/q}.
$$
By \cite[Lemma A.8.]{high_freq_ml} one can deduce that
$$
\mathbb{\overline{E}}[|
[\gamma_1^l-\gamma_1^{l-1}]^N_{CPF}(\varphi) -  [\gamma_1^l-\gamma_1^{l-1}](\varphi)|^q]^{1/q} \leq
C(\|\varphi\|+\|\varphi\|_{\textrm{Lip}})\frac{\Delta_l^{1/2}}{\sqrt{N}}
$$
and hence the initialization follows.

We now assume the result at ranks $1,\dots,t-1$ and consider $t$. We have, almost surely, that (via \eqref{eq:mart_nc_cpf})
\begin{equation}\label{eq:main_proof4}
 [\gamma_t^l-\gamma_t^{l-1}]^N_{CPF}(\varphi) -  [\gamma_t^l-\gamma_t^{l-1}](\varphi) = \sum_{j=1}^3 T_j
\end{equation}
where
\begin{eqnarray*}
T_1 & = &  \sum_{p=0}^{t-1} \Big[[\gamma_{p}^{l} - \gamma_{p}^{l-1}]_{CPF}^{N}(1) - 
[\gamma_{p}^{l} - \gamma_{p}^{l-1}](1)
\Big]
[\pi_p^{l,N}-\Phi_p^l(\pi_{p-1}^{l,N})](\mathbf{Q}_{p,t-1}^l(\mathbf{G}_{t-1}^l\pmb{\varphi}^l)) \\
T_2 & = & \sum_{p=0}^{t-1}[\gamma_{p}^{l} - \gamma_{p}^{l-1}](1)
[\pi_p^{l,N}-\Phi_p^l(\pi_{p-1}^{l,N})](\mathbf{Q}_{p,t-1}^l(\mathbf{G}_{t-1}^l\pmb{\varphi}^l)) \\
T_3 & = &  \sum_{p=0}^{t-1}  \check{\gamma}_{p,CPF}^{l-1,N}(1)\Big[
[\pi_p^{l,N}-\Phi_p^l(\pi_{p-1}^{l,N})](\mathbf{Q}_{p,t-1}^l(\mathbf{G}_{t-1}^l\pmb{\varphi}^l)) -
[\check{\pi}_p^{l-1,N}-\Phi_p^{l-1}(\check{\pi}_{p-1}^{l-1,N})](\mathbf{Q}_{p,t-1}^{l-1}(\mathbf{G}_{t-1}^{l-1}\pmb{\varphi}^{l-1}))
\Big].
\end{eqnarray*}
By using Minkowski's inequality, we can upper-bound the $\mathbb{L}_q-$norms of $T_1-T_3$ independently. For $T_1$, again applying the Minkowski
inequality $t$ times, one has
$$
\mathbb{\overline{E}}[|T_1|^q]^{1/q} \leq  \sum_{p=0}^{t-1} \mathbb{\overline{E}}\Big[\Big|
\Big[[\gamma_{p}^{l} - \gamma_{p}^{l-1}]_{CPF}^{N}(1) - 
[\gamma_{p}^{l} - \gamma_{p}^{l-1}](1)
\Big]
[\pi_p^{l,N}-\Phi_p^l(\pi_{p-1}^{l,N})](\mathbf{Q}_{p,t-1}^l(\mathbf{G}_{t-1}^l\pmb{\varphi}^l))\Big|^q\Big]^{1/q}.
$$
Applying Cauchy-Schwarz and the induction hypothesis, along with \cite[Lemma A.10.]{high_freq_ml} yields
\begin{equation}\label{eq:main_proof1}
\mathbb{\overline{E}}[|T_1|^q]^{1/q} \leq C(\|\varphi\|+\|\varphi\|_{\textrm{Lip}})\frac{\Delta_l^{1/4}}{\sqrt{N}}.
\end{equation}
For $T_2$, applying the Minkowski
inequality $t$ times and the Cauchy-Schwarz inequality
$$
\mathbb{\overline{E}}[|T_2|^q]^{1/q} \leq  \sum_{p=0}^{t-1} \Big\{\mathbb{\overline{E}}[|[\gamma_{p}^{l} - \gamma_{p}^{l-1}](1)|^{2q}]^{1/(2q)}
\mathbb{\overline{E}}[|[\pi_p^{l,N}-\Phi_p^l(\pi_{p-1}^{l,N})](\mathbf{Q}_{p,t-1}^l(\mathbf{G}_{t-1}^l\pmb{\varphi}^l))|^{2q}]^{1/(2q)}
\Big\}.
$$
For the left expectation, one can apply Lemma \ref{lem:disc_zakai} 1.~and for the right the (conditional) Marcinkiewicz-Zygmund and Jensen inequalies
along with \cite[Lemma A.10.]{high_freq_ml}, to give
\begin{equation}\label{eq:main_proof2}
\mathbb{\overline{E}}[|T_2|^q]^{1/q} \leq C(\|\varphi\|+\|\varphi\|_{\textrm{Lip}})\frac{\Delta_l^{1/2}}{\sqrt{N}}.
\end{equation}
For $T_3$, using a similar strategy as for $T_1$ and $T_2$ one has the upper-bound
$$
\mathbb{\overline{E}}[|T_3|^q]^{1/q} \leq
$$
$$
\sum_{p=0}^{t-1}\mathbb{\overline{E}}[\check{\gamma}_{p,CPF}^{l-1,N}(1)^{2q}]^{1/(2q)}
\mathbb{\overline{E}}\Big[
\Big|
[\pi_p^{l,N}-\Phi_p^l(\pi_{p-1}^{l,N})](\mathbf{Q}_{p,t-1}^l(\mathbf{G}_{t-1}^l\pmb{\varphi}^l)) -
[\check{\pi}_p^{l-1,N}-\Phi_p^{l-1}(\check{\pi}_{p-1}^{l-1,N})](\mathbf{Q}_{p,t-1}^{l-1}(\mathbf{G}_{t-1}^{l-1}\pmb{\varphi}^{l-1}))
\Big|^{2q}\Big]^{1/(2q)}.
$$
For the left expectation, one can using the bound \cite[(14)]{high_freq_ml} and then take expectations w.r.t.~the data to yield that
$$
\mathbb{\overline{E}}[\check{\gamma}_{p,CPF}^{l-1,N}(1)^{2q}]^{1/(2q)} \leq C
$$
where $C$ does not depend upon $l$. For the right expectation one can use the (conditional) Marcinkiewicz-Zygmund and Jensen inequalites, one can deduce that
$$
\mathbb{\overline{E}}[|T_3|^q]^{1/q} \leq \frac{C}{\sqrt{N}}\sum_{p=0}^{t-1}
\mathbb{\overline{E}}\Big[\Big|
\mathbf{Q}_{p,t-1}^l(\mathbf{G}_{t-1}^l\pmb{\varphi}^l)(U_p^{l,1}) -
\mathbf{Q}_{p,t-1}^{l-1}(\mathbf{G}_{t-1}^{l-1}\pmb{\varphi}^{l-1})(\check{U}_p^{l-1,1})
\Big|^{2q}\Big]^{1/(2q)}.
$$
The expectation in the summand can be controlled by using a very similar approach to the proof of \cite[Lemma A.4.]{high_freq_ml}, to yield
\begin{equation}\label{eq:main_proof3}
\mathbb{\overline{E}}[|T_3|^q]^{1/q} \leq C(\|\varphi\|+\|\varphi\|_{\textrm{Lip}})\frac{\Delta_l^{1/4}}{\sqrt{N}}.
\end{equation}
Noting \eqref{eq:main_proof4} along with \eqref{eq:main_proof1}-\eqref{eq:main_proof3}, the proof can be easily concluded.
\end{proof}

\begin{rem}\label{rem:pf_bound}
It straight-forward to deduce that using the representation \eqref{eq:mart_nc} and the strategy used in the proof above, that one can prove the following under
(D\ref{hyp_diff:1}). For any $(t,q)\in\mathbb{N}\times\mathbb{N}$, there exists a $C<+\infty$ such that for any $(l,\varphi)\in\mathbb{N}_0\times \mathcal{B}_b(\mathbb{R}^{d_x})\cap\textrm{\emph{Lip}}_{\|\cdot\|_2}(\mathbb{R}^{d_x})$:
$$
\mathbb{\overline{E}}[|[\gamma_{t,PF}^{l,N}-\gamma_t^l](\varphi)|^q]^{1/q} \leq C(\|\varphi\|+\|\varphi\|_{\textrm{\emph{Lip}}})\frac{1}{\sqrt{N}}.
$$
\end{rem}

\begin{lem}\label{lem:disc_zakai}
Assume (D\ref{hyp_diff:1}). Then for any $(t,q)\in\mathbb{N}\times\mathbb{N}$, there exists a $C<+\infty$ such that:
\begin{enumerate}
\item{for any $(l,\varphi)\in\mathbb{N}\times \mathcal{B}_b(\mathbb{R}^{d_x})\cap\textrm{\emph{Lip}}_{\|\cdot\|_2}(\mathbb{R}^{d_x})$
$$
\mathbb{\overline{E}}[|[\gamma_t^l-\gamma_t^{l-1}](\varphi)|^q]^{1/q} \leq C(\|\varphi\|+\|\varphi\|_{\textrm{\emph{Lip}}})\Delta_l^{1/2}
$$
}
\item{for any $(l,\varphi)\in\mathbb{N}_0\times \mathcal{B}_b(\mathbb{R}^{d_x})\cap\textrm{\emph{Lip}}_{\|\cdot\|_2}(\mathbb{R}^{d_x})$
$$
\mathbb{\overline{E}}[|[\gamma_t^l-\gamma_t](\varphi)|^q]^{1/q} \leq C(\|\varphi\|+\|\varphi\|_{\textrm{\emph{Lip}}})\Delta_l^{1/2}.
$$
}
\end{enumerate}
\end{lem}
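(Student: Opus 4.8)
The plan is to prove the second bound (level $l$ against the truth) first and deduce the first bound (consecutive levels) from it. For the first bound, with $l\in\mathbb{N}$ I would write $[\gamma_t^l-\gamma_t^{l-1}](\varphi)=[\gamma_t^l-\gamma_t](\varphi)-[\gamma_t^{l-1}-\gamma_t](\varphi)$ and apply Minkowski together with the second bound at levels $l$ and $l-1$; since $\Delta_{l-1}=2\Delta_l$ we have $\Delta_{l-1}^{1/2}=\sqrt 2\,\Delta_l^{1/2}$, so the two contributions collapse into $C(\|\varphi\|+\|\varphi\|_{\textrm{Lip}})\Delta_l^{1/2}$. Hence the whole task reduces to the second bound. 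There my first move is to discard the conditioning: since $\gamma_t^l(\varphi)-\gamma_t(\varphi)=\overline{\mathbb{E}}[\varphi(\widetilde X_t)Z_t^l-\varphi(X_t)Z_t\mid\mathcal{Y}_t]$, conditional Jensen and the tower property give $\overline{\mathbb{E}}[|[\gamma_t^l-\gamma_t](\varphi)|^q]^{1/q}\le\overline{\mathbb{E}}[|\varphi(\widetilde X_t)Z_t^l-\varphi(X_t)Z_t|^q]^{1/q}$, an unconditional $\mathbb{L}_q(\overline{\mathbb{P}})$-norm. By Minkowski I split this into a terminal-state term $\overline{\mathbb{E}}[|(\varphi(\widetilde X_t)-\varphi(X_t))Z_t^l|^q]^{1/q}$ and a Radon-Nikodym term $\overline{\mathbb{E}}[|\varphi(X_t)(Z_t^l-Z_t)|^q]^{1/q}$.

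Two ingredients drive both terms. First, under $\overline{\mathbb{P}}$ the observation $Y$ is a standard Brownian motion independent of $W$, hence of $X$ and $\widetilde X$; conditioning on the signal, both $\log Z_t$ and $\log Z_t^l$ are Gaussian minus one half of their conditional variance, and these variances are bounded by $\|h\|^2 t$ because $h$ is bounded under (D\ref{hyp_diff:1}). This yields a moment bound $\sup_{l}\overline{\mathbb{E}}[(Z_t+Z_t^l)^p]<\infty$ for every $p\ge 1$, uniform in $l$. Second, (D\ref{hyp_diff:1}) supplies the classical Euler strong-error estimate $\sup_{0\le s\le t}\overline{\mathbb{E}}[\|X_s-\widetilde X_{\underline s}\|_2^p]^{1/p}\le C\Delta_l^{1/2}$, where $\underline s:=\Delta_l\lfloor s/\Delta_l\rfloor$. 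For the terminal-state term I would use $|\varphi(\widetilde X_t)-\varphi(X_t)|\le\|\varphi\|_{\textrm{Lip}}\|X_t-\widetilde X_t\|_2$, Cauchy-Schwarz to decouple the increment from $Z_t^l$, and then the two ingredients, obtaining $C\|\varphi\|_{\textrm{Lip}}\Delta_l^{1/2}$.

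The Radon-Nikodym term carries the main work. After $|\varphi(X_t)|\le\|\varphi\|$, I would write $|Z_t^l-Z_t|\le(Z_t^l+Z_t)\,|\log Z_t^l-\log Z_t|$ and use Cauchy-Schwarz so that the uniform moment bound absorbs $(Z_t^l+Z_t)$ and it remains to establish $\overline{\mathbb{E}}[|\log Z_t^l-\log Z_t|^{2q}]^{1/(2q)}\le C\Delta_l^{1/2}$. Decompose $\log Z_t-\log Z_t^l=\mathcal I-\tfrac12\mathcal J$ with $\mathcal I=\int_0^t(h(X_s)-h(\widetilde X_{\underline s}))^*dY_s$ and $\mathcal J=\int_0^t(|h(X_s)|^2-|h(\widetilde X_{\underline s})|^2)\,ds$; the key simplification is that the discrete sum defining $\log Z_t^l$ is \emph{exactly} the It\^o integral of the piecewise-constant integrand $h(\widetilde X_{\underline s})$ against $Y$, so no separate quadrature error arises. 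Then $\mathcal I$ is controlled by Burkholder-Davis-Gundy (legitimate since $Y$ is independent of $(X,\widetilde X)$), reducing its $\mathbb{L}_{2q}$-norm to the $\mathbb{L}_q$-norm of $\int_0^t\|h(X_s)-h(\widetilde X_{\underline s})\|_2^2\,ds$, and $\mathcal J$ is handled directly using $\big||h(x)|^2-|h(y)|^2\big|\le 2\|h\|\,\|h\|_{\textrm{Lip}}\|x-y\|_2$. Both reduce, through the Lipschitz property of $h$ and the strong-error estimate, to $C\Delta_l^{1/2}$, giving $C\|\varphi\|\Delta_l^{1/2}$ for this term; summing the two terms gives the stated bound.

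The hard part will be the uniform-in-$l$ control of the exponential weights simultaneously with the strong-error rate: one must confirm that the Gaussian moment bounds for $Z_t$ and $Z_t^l$ are genuinely independent of $l$ — which they are, since the discrete quadratic variation $\Delta_l\sum_k|h(\widetilde X_{k\Delta_l})|^2$ is bounded by $\|h\|^2 t$ exactly as its continuous counterpart — and that the BDG reduction for $\mathcal I$ correctly exploits the $\overline{\mathbb{P}}$-independence of $Y$ from the signal. The remaining estimates are routine consequences of (D\ref{hyp_diff:1}) and the classical Euler/functional-discretization results of \cite{picard}.
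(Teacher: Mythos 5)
Your argument is essentially correct, but it takes a genuinely different route from the paper's, for the simple reason that the paper does not prove this lemma at all: its entire proof is the citation of Lemmas A.8 and A.5 of \cite{high_freq_ml}, deferring both bounds to the companion multilevel paper. What you have written is, in effect, a self-contained reconstruction of what those cited lemmas must contain, and the ingredients you isolate are the right ones: (i) the reduction of part~1 to part~2 by the triangle inequality (harmless since $\Delta_{l-1}=2\Delta_l$); (ii) conditional Jensen to strip the conditioning on $\mathcal{Y}_t$; (iii) the uniform-in-$l$ $\mathbb{L}_p$ bounds on $Z_t$ and $Z_t^l$, which hold exactly as you say because under $\overline{\mathbb{P}}$ the log-weights are conditionally Gaussian given the signal, with (discrete or continuous) quadratic variation bounded by $\|h\|^2 t$; (iv) the identification of the discrete sum in $\log Z_t^l$ as the It\^o integral of the piecewise-constant, adapted integrand $h(\widetilde{X}_{\underline{s}})$ against $Y$, so that Burkholder--Davis--Gundy applies with no separate quadrature remainder; and (v) the strong Euler rate $\Delta_l^{1/2}$ together with the $\mathbb{L}_{2q}$ modulus of continuity of $X$ over an interval of length $\Delta_l$ (needed since $\underline{s}\neq s$), both available under (D\ref{hyp_diff:1}). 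One small mismatch worth flagging: as printed, the paper defines $\gamma_t^l(\varphi)=\overline{\mathbb{E}}[\varphi(X_t)Z_t^l(\widetilde{X}_0,\dots,\widetilde{X}_{t-\Delta_l})\,|\,\mathcal{Y}_t]$ with $\varphi$ evaluated at the \emph{true} signal $X_t$, in which case your terminal-state term vanishes identically and only the Radon--Nikodym term survives; your reading with $\varphi(\widetilde{X}_t)$ is the one consistent with the particle-filter estimators and with \cite{high_freq_ml}, and in either case your argument covers the harder of the two situations. The trade-off is the obvious one: the paper's citation is short but opaque, while your proof is longer but makes the mechanism --- and the origin of the rate $\Delta_l^{1/2}$, which is what ultimately drives the cost analysis of Section \ref{sec:cost_disc} --- completely explicit.
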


\begin{proof}
The first result is \cite[Lemma A.8.]{high_freq_ml} and the second is \cite[Lemma A.5.]{high_freq_ml}.
\end{proof}

\end{document}